\newtheorem{prop}{Proposition}[section]
\newtheorem{thm}[prop]{Theorem}
\newtheorem{lem}[prop]{Lemma}
\newtheorem{cor}[prop]{Corollary}
\theoremstyle{remark}
\newtheorem{rem}[prop]{Remark}
\newcommand{\cst}{\ifmmode\mathrm{C}^*\else{$\mathrm{C}^*$}\fi}
\newcommand{\tens}{\otimes}
\newcommand{\atens}{\otimes_{\text{\tiny alg}}}
\newcommand{\eps}{\varepsilon}
\newcommand{\ph}{\varphi}
\newcommand{\comp}{\circ}
\newcommand{\RR}{\mathbb{R}}
\newcommand{\TT}{\mathbb{T}}
\newcommand{\CC}{\mathbb{C}}
\newcommand{\GG}{\mathbb{G}}
\newcommand{\KK}{\mathbb{K}}
\newcommand{\ZZ}{\mathbb{Z}}
\newcommand{\cT}{\mathcal{T}}
\newcommand{\bal}{\boldsymbol{\alpha}}
\newcommand{\id}{\mathrm{id}}
\newcommand{\I}{\mathbb{1}}
\newcommand{\dplus}{\,\dot{+}\,}
\newcommand{\aff}{\,\eta\,}
\newcommand{\Fq}{\mathbb{}F_q}
\newcommand{\cH}{\mathcal{H}}
\newcommand{\ii}{\mathrm{i}}
\newcommand{\cC}{\mathfrak{C}}
\newcommand{\cK}{\mathcal{K}}
\newcommand{\cL}{\mathcal{L}}
\newcommand{\st}{\:\vline\:}
\newcommand{\bga}{\boldsymbol{\gamma}}
\renewcommand{\Bar}[1]{\overline{#1}}
\DeclareMathOperator{\C}{C}
\DeclareMathOperator{\M}{M}
\DeclareMathOperator{\Mor}{Mor}
\DeclareMathOperator{\B}{B}
\DeclareMathOperator{\supp}{supp}
\DeclareMathOperator{\spec}{Sp}
\DeclareMathOperator{\Phase}{Phase}
\DeclareMathOperator{\spn}{span}
\numberwithin{equation}{section}
\begin{document}

\title{Examples of non-compact quantum group actions}

\date{\today}

\author{Piotr M.~So{\l}tan}
\address{Institute of Mathematics, Polish Academy of Sciences\newline
and\newline
Department of Mathematical Methods in Physics, Faculty of Physics, University of Warsaw}
\email{piotr.soltan@fuw.edu.pl}

\thanks{Research partially supported by Polish government grant no.~N201 1770 33.}

\begin{abstract}
We present two examples of actions of non-regular locally compact quantum groups on their homogeneous spaces. The homogeneous spaces are defined in a way specific to these examples, but the definitions we use have the advantage of being expressed in purely $\mathrm{C}^*$-algebraic language. We also discuss continuity of the obtained actions. Finally we describe in detail a general construction of quantum homogeneous spaces obtained as quotients by compact quantum subgroups.
\end{abstract}

\subjclass[2000]{46L89, 46L55, 46L85}
% MSC2000:
% 46L89 nc math based on C*
% 46L55 nc dyn sys
% 46L85 nc top
%
% MSC2010:
% 46L89 nc math based on C*
% 46L65 nc top
% 46L55 nc dyn sys
% 58B32 geom of q.g.
\keywords{quantum group, quantum group action, $\mathrm{C}^*$-algebra, quantum homogeneous space}
\maketitle

\section{Introduction}\label{intro}

The main aim of this paper is to present two examples of quantum homogeneous spaces for non-regular quantum groups. We will construct quotients of the quantum ``$az+b$'' and quantum $\mathrm{E}(2)$ groups (from \cite{azb,nazb,pu-soH} and \cite{E2} respectively) by their classical subgroups. In both cases our definition of the quantum homogeneous space will be deeply rooted in the particular form of the \cst-algebras related to these quantum groups. On the other hand, our definitions of quantum homogeneous spaces will be given in purely \cst-algebraic language, without any use of von Neumann algebras. Let us also remark that non-regular quantum groups do not fit into the elaborate and powerful framework of \cite{vaes-imp}. We will prove that in both cases the action of the quantum group on its homogeneous space is continuous in an appropriate sense. The second example (presented in Section \ref{E2sect}) can also be used as basis of a simple definition of a quantum homogeneous space of the form $\GG/\KK$, where $\GG$ is a quantum group and $\KK$ is a \emph{compact} quantum subgroup of $\GG$. We develop this idea in the last section. The notion of a quantum subgroup we will use is more restrictive than that of \cite{vaes-imp}. On the other hand, our construction works for general bisimplifiable Hopf \cst-algebras, not only for locally compact quantum groups. In particular we can use universal versions of quantum groups (\cite{k-univ}, \cite[Section 5]{mmu2}).

We will precede our examples of quantum homogeneous spaces with a brief discussion of actions of classical locally compact groups on \cst-algebras. We shall provide a description of such actions in language appropriate for generalization to quantum groups. Continuity of actions is discussed and a definition of a continuous action of a quantum group is proposed and compared with existing approaches.

The tools used to prove our results will range from the notion of $G$-product and Lanstad algebra to \cst-algebras generated by unbounded affiliated elements. The material on crossed products by of \cst-algebras by actions of abelian groups (including the notion of a $G$-product and Lanstad algebra) can be found in \cite{lanstad,ped} and \cite{kasprzak}. For the notions of multiplier algebra of a \cst-algebra, strict topology, morphisms of \cst-algebras and elements affiliated to \cst-algebras we refer to \cite{unbo,lance,gen}. Let us only mention here that by a \emph{morphism} form a \cst-algebra $A$ to another \cst-algebra $B$ we shall mean a $*$-homomorphism $\ph$ form $A$ to the multiplier algebra $\M(B)$ of $B$ which is \emph{nondegenerate,} i.e.~one whose image contains an approximate unit for $B$. The set of all morphisms from $A$ to $B$ will be denoted by $\Mor(A,B)$. In \cite{gen} the reader will also find a detailed exposition of the concept of a \cst-algebra generated by unbounded elements affiliated with it. We will also use some earlier work on one of our our examples from \cite{pu-so} as well as a wide range of results related to the considered quantum groups (\cite{opeq,E2,contr,azb,nazb,pu-soH}).

The definition of a locally compact quantum group is due to Kustermans and Vaes (\cite{kv}), but we will not be making any use of the Haar weights of the quantum groups under consideration. On the other hand our quantum groups will have a continuous counit. For the theory of compact quantum groups we refer to \cite{cqg}.

Let us briefly describe the contents of the paper. We begin with a section which carefully rephrases the theory of actions of locally compact groups on \cst-algebras int the language suitable for quantum groups (much like various aspects of locally compact group actions on locally compact spaces were treated in \cite{ellwood}). There are no new results in that section, but we feel that some concepts present in the literature require clarification. At the end of this section we define continuous actions of quantum groups on \cst-algebras (quantum spaces) and provide a short explanation of the standard problems encountered in construction of quantum homogeneous spaces. In Section \ref{azbSect} we introduce the quantum ``$az+b$'' groups for various values of the deformation parameter and define the homogeneous space obtained as the quotient by a classical subgroup. This construction was already performed in \cite{pu-so}. Then we analyze the action of the quantum ``$az+b$'' group on its homogeneous space to show that it is continuous. The example from Section \ref{E2sect} is very similar in spirit, but the methods of analyzing this example are significantly different.

Motivated by the Example from Section \ref{E2sect}, in the last section we generalize known construction of a quotient quantum homogeneous space for compact quantum group (cf.~\cite{spheres,podles}) to the situation where the group is no longer compact, but the subgroup is. We show that the action of the original quantum group on the resulting quantum homogeneous space is continuous.

\section{Continuity of classical actions}\label{first}

Let $(B,G,\alpha)$ be a \cst-dynamical system. Then for each $x\in{B}$ we have the function
\begin{equation}\label{balx}
\bal(x):G\ni{t}\longmapsto\alpha_t(x)\in{B}.
\end{equation}
This function has constant norm and is norm-continuous by assumption. In particular it does not vanish at infinity on $G$ unless $x=0$. It follows that $\bal(x)\not\in\C_0(G)\tens{B}$ for non-zero $x$. However, it is easy to see that $\bal$ is a $*$-homomorphism from $B$ to $\M\bigl(\C_0(G)\tens{B}\bigr)$, where the last algebra is naturally identified with the space of all norm-bounded functions $G\to\M(B)$ continuous in the strict topology.

This raises the question how to describe the special property that for each $x\in{B}$ the function $\bal(x)$ has its values in $B$ (not merely in $\M(B)$) and is norm-continuous (not only strictly continuous). The well known answer is that these properties are equivalent to the fact that for each $f\in\C_0(G)$ the element
\begin{equation*}
(f\tens\I)\bal(x)\in\C_0(G)\tens{B}
\end{equation*}
(consider a function $f$ constant and non-zero on a neighborhood of a given point $t\in{G}$).

Let us note here the fact which is well known, but difficult to find in the literature:

\begin{prop}\label{pierwsze}
Let $(B,G,\alpha)$ be a \cst-dynamical system and for each $x\in{B}$ let $\bal(x)$ be defined by \eqref{balx}. Then the linear span of the set
\begin{equation*}
\bigl\{(f\tens\I)\bal(x)\st{f}\in\C_0(G),\;x\in{B}\bigr\}
\end{equation*}
is dense in $\C_0(G)\tens{B}$.
\end{prop}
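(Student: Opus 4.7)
The plan is to exhibit a linear isometric bijection $\Phi$ of $\C_0(G)\tens B$ onto itself that carries elementary tensors $f\tens x$ to the elements $(f\tens\I)\bal(x)$; density of the span in the statement then follows immediately from density of the span of elementary tensors in $\C_0(G)\tens B$.

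To set this up, identify $\C_0(G)\tens B$ with $\C_0(G,B)$ and define
\begin{equation*}
(\Phi F)(t)=\alpha_t\bigl(F(t)\bigr),\qquad F\in\C_0(G,B),\ t\in G.
\end{equation*}
The key point is that $\Phi F$ again lies in $\C_0(G,B)$. Since each $\alpha_t$ is an isometry, $\|\Phi F(t)\|=\|F(t)\|$, which takes care of the vanishing at infinity and of the isometric character of $\Phi$. Continuity of $\Phi F$ follows from the fact that, in a \cst-dynamical system, the map $G\times B\ni(t,x)\mapsto\alpha_t(x)\in B$ is jointly norm-continuous on norm-bounded sets. Indeed, if $t_n\to t$ in $G$ and $x_n\to x$ in $B$, then
\begin{equation*}
\|\alpha_{t_n}(x_n)-\alpha_t(x)\|\le\|x_n-x\|+\|\alpha_{t_n}(x)-\alpha_t(x)\|\longrightarrow 0
\end{equation*}
by strong continuity of $\alpha$. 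Applied to $t_n\to t$ and $x_n=F(t_n)\to F(t)$ (by continuity of $F$) this yields norm-continuity of $\Phi F$. The inverse $(\Phi^{-1}G)(t)=\alpha_{t^{-1}}\bigl(G(t)\bigr)$ lies in $\C_0(G,B)$ by the same argument, so $\Phi$ is a linear isometric bijection.

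Finally, on an elementary tensor $f\tens x$ we compute
\begin{equation*}
\Phi(f\tens x)(t)=\alpha_t\bigl(f(t)x\bigr)=f(t)\alpha_t(x)=\bigl((f\tens\I)\bal(x)\bigr)(t),
\end{equation*}
so $\Phi(f\tens x)=(f\tens\I)\bal(x)$. Since linear combinations of elementary tensors are dense in $\C_0(G)\tens B$ and $\Phi$ is a homeomorphism, the image of this dense set, namely the span of the family $\bigl\{(f\tens\I)\bal(x)\st f\in\C_0(G),\ x\in B\bigr\}$, is dense in $\C_0(G)\tens B$.

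The only subtle point is the verification that $\Phi$ actually preserves $\C_0(G,B)$; once the joint continuity on bounded sets is recorded, everything else is formal. No partition-of-unity or pointwise approximation arguments are needed.
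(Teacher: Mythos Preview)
Your argument is correct. The map $\Phi$ you define is precisely the canonical ``twist'' automorphism of $\C_0(G)\tens B$, and once one checks that $\Phi$ preserves $\C_0(G,B)$ (which you do cleanly via joint continuity of $(t,x)\mapsto\alpha_t(x)$), the density statement is immediate.

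It is worth noting, however, that the paper deliberately gives a \emph{different} proof of this proposition. The paper's argument is a direct approximation via partitions of unity: given $F\in\C_c(G,B)$, cover $\supp F$ by sets on which $F$ is uniformly close to a shifted orbit map, and assemble an approximant $\sum_j(\chi_j\tens\I)\bal\bigl(\alpha_{t_j^{-1}}(F(t_j))\bigr)$. The author explicitly flags this as an ``elementary'' proof that avoids the extra structure on $\C_0(G)$, and then gives essentially \emph{your} automorphism argument later, inside the proof of the next proposition (there phrased as: the map $f\tens x\mapsto(f\tens\I)\bal(x)$ is bounded, multiplicative, and has inverse $f\tens x\mapsto(f\tens\I)(\id\tens\kappa)\bal(x)$). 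So your approach is not new to the paper---it is the paper's second proof, presented a bit more concretely at the level of $\C_0(G,B)$ rather than via multiplicativity and the antipode. What the partition-of-unity proof buys is that it never invokes invertibility of the group action or the inverse map $t\mapsto t^{-1}$; what your approach buys is brevity and transparency once one is willing to use that $\alpha$ consists of automorphisms.
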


We will give a very short and easy proof of this fact once appropriate structure on $\C_0(G)$ has been defined (in the proof of Proposition \ref{drugie}). Nevertheless we want to include an elementary proof of Proposition \ref{pierwsze} which does not use the additional structure of $\C_0(G)$.

\begin{proof}[Proof of Proposition \ref{pierwsze}]
We need to approximate any element of $\C_0(G)\tens{B}$ by functions of a specific form. It is enough to approximate elements from a dense subspace $\C_c(G,B)\subset\C_0(G,B)\cong\C_0(G)\tens{B}$. Therefore let $F$ be a fixed continuous function $G\to{B}$ with compact support. For $t\in\supp{F}$ let
\begin{equation*}
U_t=\bigl\{s\in{G}\st\bigl\|F(s)-\alpha_{st^{-1}}\bigl(F(t)\bigr)\bigr\|<\eps\bigr\}.
\end{equation*}
By compactness of $\supp{F}$ there exist $t_1,\ldots,t_n$ such that $\supp{F}\subset\bigcup\limits_{i=1}^nU_{t_i}$. Let $(\chi_i)_{i=1,\ldots,n}$ be a partition of unity subordinated to the covering $(U_{t_i})_{i=1,\ldots,n}$ of $\supp{F}$ and define
\begin{equation*}
X(t)=\sum_{j=1}^n\chi_j(t)\alpha_t\left(\alpha_{t_j^{-1}}\bigl(F(t_j)\bigr)\right).
\end{equation*}
In other words
\begin{equation*}
X=\sum_{j=1}^n(\chi_j\tens\I)\bal\left(\alpha_{t_j^{-1}}\bigl(F(t_j)\bigr)\right).
\end{equation*}

Take now $t\in\supp{F}$. We have
\begin{equation*}
\begin{split}
\bigl\|X(t)-F(t)\bigr\|&=
\biggl\|\sum_{j=1}^n\chi_j(t)\alpha_{tt_j^{-1}}\bigl(F(t_j)\bigr)-F(t)\biggr\|\\
&=\biggl\|\sum_{j=1}^n\chi_j(t)\alpha_{tt_j^{-1}}\bigl(F(t_j)\bigr)-\sum_{j=1}^n\chi_j(t)F(t)\biggr\|\\
&\leq\sum_{j=1}^n\chi_j(t)\bigl\|\alpha_{tt_j^{-1}}\bigl(F(t_j)\bigr)-F(t)\bigr\|.
\end{split}
\end{equation*}
Now the $j$-th term of the last sum is non-zero only if $t\in{U_{t_j}}$. It follows that each term of this sum is either zero or less than $\chi_j(t)\eps$.

We have thus shown that for each $i$ we have $\sup\limits_{t\in{U_{t_i}}}\bigl\|X(t)-F(t)\bigr\|\leq\eps$. On the other hand, outside $\bigcup\limits_{i=1}^nU_{t_i}$ both $X$ and $F$ are zero.
\end{proof}

An immediate corollary of Proposition \ref{pierwsze} is that the linear span of
\begin{equation*}
\bigl\{(f\tens{y})\bal(x)\st{f}\in\C_0(G),\;x,y\in{B}\bigr\}
\end{equation*}
is dense in $\C_0(G)\tens{B}$. Indeed, if $f\tens{y}$ is a simple tensor in $\C_0(G)\tens{B}$ then we can approximate it by finite sums of the form
\begin{equation*}
\sum(f_i\tens\I)\bal(x_i).
\end{equation*}
Now if $(e_\lambda)$ is a (bounded) approximate unit for $B$ then, by the diagonal argument, we can approximate $f\tens{y}$ by sums of the form $\sum(f_i\tens{e_\lambda})\bal(x_i)$ because
\begin{equation*}
\biggl\|\sum(f_i\tens{e_\lambda})\bal(x_i)-g\tens{y}\biggr\|\leq
\biggl\|(\I\tens{e_\lambda})\biggl(\sum(f_i\tens\I)\bal(x_i)-g\tens{y}\biggr)\biggr\|+
\bigl\|g\tens(e_\lambda{y}-y)\bigr\|.
\end{equation*}
Since the approximation works for simple tensors, it also works for general elements of $\C_0(G)\tens{B}$.

\begin{cor}
Let $(B,G,\alpha)$ be a \cst-dynamical system and for each $x\in{B}$ let $\bal(x)$ be defined by \eqref{balx}. Then $\bal\in\Mor\bigl(B,\C_0(G)\tens{B}\bigr)$.
\end{cor}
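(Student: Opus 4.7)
The plan is to observe that almost everything has already been done; only nondegeneracy remains. We already know from the discussion preceding Proposition \ref{pierwsze} that $\bal$ is a $*$-homomorphism from $B$ to $\M(\C_0(G)\tens B)$, so the only thing left to verify is that the image $\bal(B)$ is nondegenerate, i.e.\ that $\bal(B)\cdot(\C_0(G)\tens B)$ has dense linear span in $\C_0(G)\tens B$.

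For this I would invoke the corollary to Proposition \ref{pierwsze} that was just established in the paragraph preceding the statement: the linear span of
\begin{equation*}
\bigl\{(f\tens y)\bal(x)\st f\in\C_0(G),\;x,y\in B\bigr\}
\end{equation*}
is dense in $\C_0(G)\tens B$. This density statement is exactly the assertion that $(\C_0(G)\tens B)\cdot\bal(B)$ is total in $\C_0(G)\tens B$. Since $\bal$ is a $*$-homomorphism, taking adjoints transfers this to totality of $\bal(B)\cdot(\C_0(G)\tens B)$, which is one of the equivalent formulations of the nondegeneracy condition. Combined with $\bal$ being a $*$-homomorphism, this yields $\bal\in\Mor\bigl(B,\C_0(G)\tens B\bigr)$.

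There is really no obstacle here: the entire content of the corollary is the density statement, which has been proved by the elementary partition-of-unity argument in Proposition \ref{pierwsze} together with the standard approximate-unit trick used immediately after. The only small point worth spelling out is the passage from $(\C_0(G)\tens B)\cdot\bal(B)$ to $\bal(B)\cdot(\C_0(G)\tens B)$ via the $*$-operation, which works because adjoints of elements of the first set are elements of the second, and norm-density is preserved under taking adjoints.
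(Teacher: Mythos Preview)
Your proposal is correct and matches the paper's intended argument exactly: the paper does not even write out a separate proof of the corollary, since the density of $\{(f\tens y)\bal(x)\}$ established in the paragraph just before is precisely the nondegeneracy of $\bal$, and you have spelled out this implication (including the adjoint passage) correctly.
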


Let us note here one other density condition appearing in the literature. Quite clearly the linear span of
\begin{equation*}
\biggl\{\int\ph(t)\alpha_t(x)\,dt\st\ph\in{L^1}(G),\;x\in{B}\biggr\}
\end{equation*}
is dense in $B$ (any $x\in{B}$ is the limit of such integrals with $\delta$-like net of integrable functions). In order to rewrite this condition in a more convenient way let us introduce for each $\ph\in{L^1}(G)$ the continuous functional
\begin{equation*}
\omega_\ph:\C_0(G)\ni{f}\longmapsto\int\ph(t)f(t)\,dt.
\end{equation*}
Then we see that the linear span of
\begin{equation*}
\bigl\{(\omega_\ph\tens\id)\bal(x)\st\ph\in{L^1}(G),\;x\in{B}\bigr\}
\end{equation*}
is dense in $B$.

Let us introduce the standard comultiplication $\Delta\in\Mor\bigl(\C_0(G),\C_0(G)\tens\C_0(G)\bigr)$:
\begin{equation}\label{del_class}
\Delta(f)(s,t)=f(st).
\end{equation}
Then one easily checks that $(\Delta\tens\id)\comp\bal=(\id\tens\bal)\comp\bal$.

Clearly one can also see that $(\epsilon\tens\id)\comp\bal=\id$, where $\epsilon$ is the evaluation functional
\begin{equation*}
\C_0(G)\ni{f}\longmapsto{f(e)}\in\CC.
\end{equation*}

We will now state and prove a simple proposition dealing with various conditions defining continuity of a group action in the language adaptable to the more general context of quantum groups.

\begin{prop}\label{drugie}
Let $G$ be a locally compact group and let $B$ be a \cst-algebra. Let
\begin{equation*}
\bal\in\Mor\bigl(B,C_0(G)\tens{B}\bigr)
\end{equation*}
be such that
\begin{itemize}
\item with $\Delta$ defined by \eqref{del_class} we have
\begin{equation}\label{Del-bal}
(\Delta\tens\id)\comp\bal=(\id\tens\bal)\comp\bal
\end{equation}
\item for any $f\in\C_0(G)$ and $x\in{B}$ we have
\begin{equation*}
(f\tens\I)\bal(x)\in\C_0(G)\tens{B},
\end{equation*}
\end{itemize}
Then the following conditions are equivalent
\begin{enumerate}
\item\label{d1} $(\epsilon\tens\id)\comp\bal=\id$, where $\epsilon$ is the evaluation at the neutral element of $G$,
\item\label{d2} $\ker\bal=\{0\}$,
\item\label{d3} the linear span of
\begin{equation}\label{bigspan}
\bigl\{(f\tens\I)\bal(x)\st{f}\in\C_0(G),\;x\in{B}\bigr\}
\end{equation}
is dense in $\C_0(G)\tens{B}$,
\item\label{d4} the linear span of
\begin{equation}\label{smallspan}
\bigl\{(\omega_\ph\tens\id)\bal(x)\st\ph\in{L^1}(G),\;x\in{B}\bigr\}
\end{equation}
is dense in $B$.
\end{enumerate}
Moreover, if the equivalent conditions \eqref{d1}--\eqref{d4} are satisfied then then there exists a continuous action $\alpha$ of $G$ on $B$ such that $\bal$ is defined by \eqref{balx}.
\end{prop}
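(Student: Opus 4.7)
My plan is to organize the four conditions around~\eqref{d1}, establishing \eqref{d1}$\Leftrightarrow$\eqref{d2}, \eqref{d1}$\Leftrightarrow$\eqref{d4}, and \eqref{d1}$\Rightarrow$\eqref{d3}, and closing with \eqref{d3}$\Rightarrow$\eqref{d4}. The ``moreover'' clause will emerge from the same construction that delivers \eqref{d1}$\Rightarrow$\eqref{d3}. The main algebraic tools throughout are coassociativity~\eqref{Del-bal} and the counit identities $(\epsilon\tens\id)\comp\Delta=\id=(\id\tens\epsilon)\comp\Delta$ enjoyed by the classical~$\Delta$.

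For \eqref{d1}$\Leftrightarrow$\eqref{d2}, the forward direction is immediate from $x=(\epsilon\tens\id)\bal(x)$. For the converse I would set $E=(\epsilon\tens\id)\comp\bal\colon B\to\M(B)$ and use \eqref{Del-bal} to compute
\begin{equation*}
\bal\comp E=(\epsilon\tens\id\tens\id)\comp(\id\tens\bal)\comp\bal=(\epsilon\tens\id\tens\id)\comp(\Delta\tens\id)\comp\bal=\bal.
\end{equation*}
The strict extension $\bar\bal\colon\M(B)\to\M(\C_0(G)\tens B)$ remains injective, because $\bar\bal(m)=0$ forces $\bal(mb)=0$ and hence $mb=0$ for all $b\in B$ (by \eqref{d2}), so that $m=0$; applied to $m=E(x)-x$ this yields \eqref{d1}. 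An analogous computation handles \eqref{d1}$\Leftrightarrow$\eqref{d4}: the identity $(\omega_\ph\tens\epsilon)\comp\Delta=\omega_\ph$ combined with~\eqref{Del-bal} gives
\begin{equation*}
(\epsilon\tens\id)\bal\bigl((\omega_\ph\tens\id)\bal(x)\bigr)=(\omega_\ph\tens\id)\bal(x),
\end{equation*}
so under \eqref{d4} the map $E$ fixes a dense subspace of $B$ and equals $\id$. Conversely, assuming \eqref{d1}, pick $f\in\C_0(G)$ with $f(e)=1$ and nonnegative $\ph_\lambda\in L^1(G)$ of total mass $1$ with supports shrinking to $\{e\}$; the second bullet puts $(f\tens\I)\bal(x)$ inside $\C_0(G)\tens B$ where the slicing maps $\omega_{\ph_\lambda}\tens\id$ converge pointwise in norm to $\epsilon\tens\id$, and the identity $(\omega_{\ph_\lambda}\tens\id)\bigl((f\tens\I)\bal(x)\bigr)=(\omega_{f\ph_\lambda}\tens\id)\bal(x)$ presents the approximants to $x=f(e)x$ in the form required by~\eqref{smallspan}.

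The main obstacle is \eqref{d1}$\Rightarrow$\eqref{d3}, which I would handle by first using \eqref{d1} to construct a classical continuous action and then invoking Proposition~\ref{pierwsze} directly. Every $t\in G$ yields a character $\mathrm{ev}_t$ of $\C_0(G)$, so $\alpha_t=(\mathrm{ev}_t\tens\id)\comp\bal$ is a $*$-homomorphism $B\to\M(B)$. For any $f\in\C_0(G)$ with $f(t)\neq 0$, the second bullet identifies $(f\tens\I)\bal(x)$ with an element of $\C_0(G,B)$; evaluating at $t$ gives $f(t)\alpha_t(x)\in B$, so $\alpha_t(x)\in B$, and the norm-continuity of this $B$-valued function forces $t\mapsto\alpha_t(x)$ to be norm-continuous on the open set $\{f\neq 0\}$---letting $f$ vary yields continuity everywhere. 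Coassociativity combined with $(\mathrm{ev}_s\tens\mathrm{ev}_t)\comp\Delta=\mathrm{ev}_{st}$ supplies the group law for $\{\alpha_t\}$, while \eqref{d1} gives $\alpha_e=\id$, so each $\alpha_t$ is a $*$-automorphism with inverse $\alpha_{t^{-1}}$. Thus $(B,G,\alpha)$ is a $\cst$-dynamical system whose associated morphism~\eqref{balx} coincides with $\bal$, which both proves the ``moreover'' clause and---via Proposition~\ref{pierwsze}---condition~\eqref{d3}. The final implication \eqref{d3}$\Rightarrow$\eqref{d4} is a one-line slicing: approximating $g\tens b$ by $\sum(f_i\tens\I)\bal(x_i)$ and applying $\omega_\ph\tens\id$ exhibits $\omega_\ph(g)b$ as a limit of elements of~\eqref{smallspan}, and choosing $\ph,g$ with $\omega_\ph(g)\neq 0$ closes the cycle.
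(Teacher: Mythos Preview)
Your argument is correct, but it differs from the paper's in two notable respects. First, for \eqref{d2}$\Rightarrow$\eqref{d1} and \eqref{d4}$\Rightarrow$\eqref{d1} you work purely with the Hopf-algebraic identities: you exhibit $\bal\comp E=\bal$ (respectively $E\bigl((\omega_\ph\tens\id)\bal(x)\bigr)=(\omega_\ph\tens\id)\bal(x)$) directly from coassociativity and the counit law, and then conclude $E=\id$ by injectivity (respectively by density). The paper instead first builds the family $\alpha_t=(\delta_t\tens\id)\comp\bal$ and observes that all $\alpha_t$ share the same kernel and the same range as the idempotent $\alpha_e$; both implications then reduce to elementary statements about that common kernel or range. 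Second, for \eqref{d1}$\Rightarrow$\eqref{d3} you construct the \cst-dynamical system and invoke Proposition~\ref{pierwsze}. The paper deliberately avoids Proposition~\ref{pierwsze} here: once the $\alpha_t$ are automorphisms, it notes that $(f\tens x)\mapsto(f\tens\I)\bal(x)$ extends to an automorphism of $\C_0(G)\tens B$, with explicit inverse $(f\tens x)\mapsto(f\tens\I)(\id\tens\kappa)\bal(x)$ where $\kappa(f)(t)=f(t^{-1})$, so the span in~\eqref{bigspan} is the image of a dense set under an automorphism. Your Hopf-algebraic route for \eqref{d2}$\Rightarrow$\eqref{d1} and \eqref{d4}$\Rightarrow$\eqref{d1} has the virtue of generalizing verbatim to any coassociative $\bal$ with a bounded counit, while the paper's antipode trick for \eqref{d1}$\Rightarrow$\eqref{d3} is shorter and sidesteps the partition-of-unity argument entirely.
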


\begin{proof}
Let us begin with defining a family $(\alpha_t)_{t\in{G}}$ of maps $B\to{B}$ by
\begin{equation*}
\alpha_t(x)=\bal(x)(t)=(\delta_t\tens\id)\bal(x),
\end{equation*}
where $\delta_t$ is the evaluation functional $\C_0(G)\ni{f}\mapsto{f(t)}$. One immediately find that each $\alpha_t$ is an endomorphism of $B$ and that for each $x\in{B}$ the mapping $G\ni{t}\mapsto\alpha_t(x)$ is norm continuous. Moreover, it follows from \eqref{Del-bal} that
\begin{equation*}
\alpha_t\comp\alpha_s=\alpha_{ts}
\end{equation*}
for all $t,s\in{G}$. In particular $\alpha_e$ is an idempotent mapping $B\to{B}$ which commutes with all $\alpha_t$'s. The range of all $\alpha_t$'s is equal to the range of $\alpha_e$ and their kernels are all equal to $\ker{\alpha_e}$.

It follows that \eqref{d1}$\Leftrightarrow$\eqref{d2}. Indeed, \eqref{d1} clearly implies \eqref{d2}, but \eqref{d2} means that for any non-zero $x\in{B}$ the function $t\mapsto\alpha_t(x)$ is non zero. Thus if $x\in\ker{\alpha_e}$ then $\alpha_t(x)=0$ for all $t$ and so $x=0$. In other words, $\alpha_e$ is an idempotent with zero kernel, so $\alpha_e=\id$. This is exactly \eqref{d1}.

In particular if \eqref{d1} is satisfied then all $\alpha_t$'s are automorphisms and $\alpha$ becomes a continuous action of $G$ on $B$. To see that this implies \eqref{d3} without using Proposition \ref{pierwsze} note that the mapping
\begin{equation}\label{can}
\C_0(G)\atens{B}\ni(f\tens{x})\longmapsto(f\tens\I)\bal(x)\in\C_0(G)\tens{B}
\end{equation}
extends to an  automorphism of $\C_0(G)\tens{B}$. Indeed, the map \eqref{can} is bounded and multiplicative (because $\C_0(G)$ is commutative). The inverse mapping is given by
\begin{equation*}
(f\tens{x})\longmapsto(f\tens\I)(\id\tens\kappa)\bigl(\bal(x)\bigr),
\end{equation*}
where $\kappa$ is the automorphism of $\C_0(G)$ given by $\kappa(f)(t)=f(t^{-1})$. Therefore the linear span of \eqref{bigspan} is dense in $\C_0(G)\tens{B}$ as the image of $\C_0(G)\atens{B}$ under an automorphism of $\C_0(G)\tens{B}$.

The fact that \eqref{d3}$\Rightarrow$\eqref{d4} is standard: approximate a simple tensor $f\tens{y}\in\C_0(G)\tens{B}$ by sums of the form
\begin{equation*}
\sum(f_i\tens\I)\bal(x_i).
\end{equation*}
Then take $\ph\in{L^1}(G)$ such that $\omega_\ph(f)=1$. Then the elements
\begin{equation*}
\sum(\omega_{\ph{f_i}}\tens\id)\bal(x_i)
\end{equation*}
approximate $y$ and consequently the linear span of \eqref{smallspan} is dense in $B$.

Finally let us see that \eqref{d4} implies \eqref{d1}. If \eqref{d1} is not satisfied then we know that the ranges of all $\alpha_t$'s are equal to the range of $\alpha_e$ which is strictly contained in $B$ and closed (as the image of a \cst-algebra under a $*$-homomorphism). Then it follows that for any $\ph\in{L^1}(G)$ the element
\begin{equation*}
(\omega_\ph\tens\id)\bal(x)=\int\ph(t)\alpha_t(x)\,dt
\end{equation*}
also lies in the range of $\alpha_e$ and we see that \eqref{d4} is not satisfied.
\end{proof}

In what follows we will deal with quantum groups of the form $\GG=(A,\Delta)$ with continuous counit $\epsilon$. We will say that $\GG$ acts continuously on a \cst-algebra $B$ if there is a morphism $\bal\in\Mor(B,A\tens{B})$ such that
\begin{itemize}
\item $(\Delta\tens\id)\comp\bal=(\id\tens\bal)\comp\bal$,
\item for any $c\in{A}$ and $x\in{B}$ we have $(c\tens\I)\bal(x)\in{A}\tens{B}$,
\item $(\epsilon\tens\id)\comp\bal=\id$.
\end{itemize}

In the literature the last condition is often replaced by either requirement that the linear span of
\begin{equation*}
\bigl\{(c\tens\I)\bal(x)\st{c}\in{A},\;x\in{B}\bigr\}
\end{equation*}
be dense in $A\tens{B}$ or that
\begin{equation*}
\bigl\{(\omega\tens\id)\bal(x)\st\omega\in{A_*},\;x\in{B}\bigr\}
\end{equation*}
be dense in $B$ (where $A_*$ is the space of normal linear functionals on $A$, cf.\cite{mmu2}). In \cite[Proposition 5.8]{bsv} it is shown that these conditions are equivalent for \emph{regular} locally compact quantum groups and that the second condition is strictly weaker for \emph{semi-regular,} but non-regular locally compact quantum groups.

If $\bal\in\Mor(B,A\tens{B})$ is such that $(\Delta\tens\id)\comp\bal=(\id\tens\bal)\comp\bal$ and $\ker{\bal}=\{0\}$ then the established terminology is that $\bal$ is a \emph{reduced} action of $\GG$ on $B$ (see e.g.~\cite{vaes-imp,fischer}). As Proposition \ref{drugie} shows, in case when $\GG$ is a classical locally compact group, all these conditions are equivalent.

% The next theorem (also generally known, but difficult to find in literature) is of independent interest.
% \begin{thm}
% Let $G$ be a separable topological group and let $B$ be a $\cst$-algebra. Let $\Delta$ and $\epsilon$ be the standard comultiplication and counit on $\C_0(G)$ and let $\bal\in\Mor\bigl(B,\C_0(G)\tens{B}\bigr)$ be such that
% \begin{enumerate}
% \item $(\Delta\tens\id)\comp\bal=(\id\tens\bal)\comp\bal$,
% \item $(\epsilon\tens\id)\comp\bal=\id$.
% \end{enumerate}
% Then for any $f\in\C_0(G)$ and $x\in{B}$ we have $(f\tens\I)\bal{x}\in\C_0(G)\tens{B}$.
% \end{thm}
% 
% \begin{proof}
% 
% \end{proof}

Let $\GG=(A,\Delta)$ and $\KK=(C,\Delta_C)$ be quantum groups. We say that a morphism $\pi\in\Mor(A,C)$ identifies $\KK$ as a \emph{closed quantum subgroup} of $\GG$ if $\pi$ is a surjective $*$-homomorphism $A\to{C}$ and $(\pi\tens\pi)\comp\Delta=\Delta_C\comp\pi$. As we mentioned in Section \ref{intro}, such a definition of a quantum subgroup is more restrictive than that of \cite{vaes-imp}. In the following sections we will give examples of situations where this notion of quantum subgroup applies.

Let us explain the difficulty which we come across as we try to generalize the standard construction of a quotient quantum homogeneous space from \cite{podles}. Let us assume that the \cst-algebras $A$ and $B$ are commutative, so that $A=\C_0(G)$ and $C=\C_0(K)$, where $G$ is a locally compact group and $K$ is a closed subgroup of $G$. In this situation $\pi$ is the restriction morphism $A\ni{f}\mapsto\bigl.f\bigr|_K\in\M(C)$. The range rarely coincides with $C$. In the language of noncommutative topology the homogeneous space $G/K$ is described by the \cst-algebra $B=\C_0(G/K)$. It is a nontrivial question how to identify this \cst-algebra as a subalgebra of $\M(A)=\C_b(G)$. Clearly any element $x\in{B}$ should satisfy
\begin{equation}\label{nax}
(\id\tens\pi)\Delta(x)=x\tens\I,
\end{equation}
but this is certainly not enough. In fact all elements of $\M(B)$ also satisfy this condition. In the examples of Sections \ref{azbSect} and \ref{E2sect} $\GG=(A,\Delta)$ will be a non-regular locally compact quantum group with a subgroup $\KK=(C,\Delta_C)$ (which in both cases will be a classical group) and we will produce a candidate for $B$. It will be a $\cst$-subalgebra of $\M(B)$ consisting of elements $x$ satisfying \eqref{nax} and some additional conditions tailored to these specific examples. The next question which arises naturally after the definition of a homogeneous space is whether the restriction of $\Delta$ to $B$ actually defines an action of $\GG$ on $\GG/\KK$. We will have to check that $\bal=\bigl.\Delta\bigr|_B$ is a morphism from $B$ to $A\tens{B}$ and that algebraic conditions of a (co)action are satisfied. More importantly we would like to show that the action $\bal$ is continuous. None of these facts is totally obvious and we prove them by application of the theory of \cst-algebras generated by unbounded elements (\cite{gen}).

\section{The action of the quantum ``$az+b$'' group on its homogeneous space}\label{azbSect}

The construction of the quantum ``$az+b$'' group starts with choosing the value of the deformation parameter $q$. There are at least three possibilities to choose $q$. They are described in detail in \cite{pu-soH}. One of these possibilities is to take $q\in]0,1[$ (cf.~\cite[Appendix A]{azb}). It is not important for our purposes to dwell on this choice since the aspect of quantum ``$az+b$'' groups we shall need are the same regardless of the value of the deformation parameter. All details can be found in \cite{azb,nazb,pu-soH}.

We let $\Gamma$ be the subgroup of the multiplicative group $\CC\setminus\{0\}$ generated by $q$ and $\{q^{\ii{t}}\st{t}\in\RR\}$ with an appropriate fixed choice of logarithm of $q$. The group $\Gamma$ is then self-dual and we denote by $\chi$ the nondegenerate bicharacter $\Gamma\times\Gamma\to\TT$ such that
\begin{equation*}
\begin{split}
\chi(\gamma,\gamma')&=\chi(\gamma',\gamma),\\
\chi(\gamma,q^{\ii{t}})&=|\gamma|^{\ii{t}},\\
\chi(\gamma,q)&=\Phase{\gamma}
\end{split}
\end{equation*}
for all $\gamma,\gamma'\in\Gamma$ and $t\in\RR$.

Then we let $\Bar{\Gamma}$ be the closure of $\Gamma$ in $\CC$ and let $\beta$ be the action of $\Gamma$ on $\Bar{\Gamma}$ by multiplication. We define $A=\C_0(\Bar{\Gamma})\rtimes_\beta\Gamma$. An affiliated element $b\aff{A}$ is introduced as the image of the canonical generator of $\C_0(\Bar{\Gamma})$ in $A$, while $a\aff{A}$ as the unique affiliated element such that $\spec{a}\subset\Bar{\Gamma}$, $a$ is invertible (with $a^{-1}\aff{A}$) and $\bigl(\chi(a,\gamma)\bigr)_{\gamma\in\Gamma}$ is the canonical family of unitary elements of $\M(A)$ implementing the action $\beta$ on $\C_0(\Bar{\Gamma})$.

By the results of \cite{azb,nazb} (the actual reference depends on the value of $q$, cf.~\cite{pu-soH}) there exists a unique $\Delta\in\Mor(A,A\tens{A})$ such that
\begin{equation*}
\Delta(a)=a\tens{a},\qquad\Delta(b)=a\tens{b}\dplus{b\tens\I}
\end{equation*}
and $\GG=(A,\Delta)$ is a quantum group (in fact a non-regular locally compact quantum group cf.~also \cite{haarslw}).

Let us first see that $\Gamma$ is a subgroup of $\GG$ in the sense described at the end of Section \ref{first}, i.e.~there exists a morphism $\pi\in\Mor\bigl(A,\C_0(\Gamma)\bigr)$ such that
\begin{equation*}
\Delta\comp\pi=(\pi\tens\pi)\comp\Delta
\end{equation*}
and $\pi$ is a surjection onto $\C_0(\Gamma)$. Indeed, since the \cst-algebra $A$ is generated by unbounded elements $a,a^{-1}$ and $b$ affiliated with it (cf. \cite{azb,nazb,pu-soH}), it is enough to define $\pi$ on the generators by putting $\pi(b)=0$ and letting $\pi(a)$ be the canonical generator of $\C_0(\Gamma)$ (the identity function $\Gamma\ni\gamma\mapsto\gamma\in\CC$).

Let us remark here that in \cite{pu-so} it is the dual group of $\Gamma$ which is identified with a subgroup of $\GG$ which we are describing. This is correct since $\Gamma\cong\widehat{\Gamma}$, but the distinction has no bearing on our construction of quantum homogeneous space, so we will stick with the choice of $\Gamma$ as a subgroup of $\GG$.

We shall now define a quantum homogeneous space $\GG/\Gamma$. By definition the \cst-algebra $B$ of continuous functions vanishing at infinity on $\GG/\Gamma$ is the set of those $x\in\M(A)$ for which
\begin{enumerate}
\item\label{lc1} $(\id\tens\pi)\Delta(x)=x\tens\I$,
\item\label{lc2} for any $y\in\cst(\Gamma)\subset\M(A)$ the element $yx\in{A}$,
\item\label{lc3} the mapping $\Gamma\ni\gamma\mapsto\chi(a,\gamma)^*x\chi(a,\gamma)$ is continuous.
\end{enumerate}
Condition \eqref{lc1} selects elements of $\M(A)$ which are ``constant along the fibers'' of the fibration $\GG\to\GG/\Gamma$. However, they cannot correspond to functions vanishing at infinity on $\GG/\Gamma$ (e.g.~the unit $\I$ satisfies this condition). Therefore condition \eqref{lc2} is introduced to select those ``continuous bounded functions on $\GG$'' which not only are constant on cosets of $\Gamma$, but in addition vanish at infinity ``in the direction transversal to the cosets.'' Condition \eqref{lc3} does not have a classical analogue. This definition of the algebra of ``continuous functions vanishing at infinity on $\GG/\Gamma$'' was first introduced in \cite{pu-so}.

It is not difficult to check that $B=\bigl\{f(b)\st{f}\in\C_0(\Bar{\Gamma})\bigr\}$. This is because conditions \eqref{lc1}--\eqref{lc3} are really the Lanstad conditions for the $\Gamma$-product structure on $A=\C_0(\Bar{\Gamma})\rtimes_\beta\Gamma$ (cf.~\cite{lanstad,ped}). Therefore the homogeneous space $\GG/\Gamma$ is a classical space, i.e.~one described by a commutative $\cst$-algebra.

It was pointed out already in \cite{pu-so} that the restriction of $\Delta$ to $B\subset\M(A)$ defines a morphism $\bal\in\Mor(B,A\tens{B})$ and that
\begin{equation*}
(\Delta\tens\id)\comp\bal=(\id\tens\bal)\comp\bal.
\end{equation*}
Moreover one can easily see that $(\epsilon\tens\id)\comp\bal=\id$, where $\epsilon$ is the counit of $\GG$ defined by $\epsilon(a)=1$, $\epsilon(b)=0$. Therefore, in order to say that the action of $\GG$ on the homogeneous space $\GG/\Gamma$ is continuous we only need to prove the following:

\begin{thm}\label{contazb}
Let $\GG=(A,\Delta)$ be the quantum ``$az+b$'' group and let $B\subset\M(A)$ be the $\cst$-algebra of functions vanishing at infinity on the quantum homogeneous space $\GG/\Gamma$. Let $\bal\in\Mor(B,A\tens{B})$ be the morphism defined above. Then for any $c\in{A}$ and any $x\in{B}$ the element
\begin{equation}\label{cbalx}
(c\tens\I)\bal(x)\in{A\tens{B}}.
\end{equation}
\end{thm}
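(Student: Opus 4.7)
The plan is to reduce the claim to $c$ and $x$ drawn from generating subfamilies of $A$ and $B$, and then to analyse $\bal(f(b))=f(\Delta(b))$ directly using the formula $\Delta(b)=a\tens b\dplus b\tens\I$ together with the machinery of $\cst$-algebras generated by unbounded affiliated elements from \cite{gen}.

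First I would observe that the set of $c\in A$ for which $(c\tens\I)\bal(x)\in A\tens B$ holds for every $x\in B$ is norm-closed in $A$, and similarly for the admissible $x\in B$, so it suffices to verify the claim on norm-dense subfamilies of each. Since $B=\bigl\{f(b)\st f\in\C_0(\Bar{\Gamma})\bigr\}$ I would take $x=f(b)$ with $f$ in a dense subalgebra of $\C_0(\Bar{\Gamma})$ chosen so that $f$ admits a Fourier-type representation via the bicharacter $\chi$, e.g.\ functions whose $\chi$-transform has compact support in $\Gamma$. For $c$ I would exploit the crossed-product description $A=\C_0(\Bar{\Gamma})\rtimes_\beta\Gamma$ and reduce to $c$ of the form $g(b)\int_\Gamma\ph(\gamma)\chi(a,\gamma)\,d\gamma$ with $g\in\C_c(\Bar{\Gamma})$ and $\ph\in\C_c(\Gamma)$.

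Next, because $\Delta\in\Mor(A,A\tens A)$ and $b\aff A$, the element $\Delta(b)=a\tens b\dplus b\tens\I$ is affiliated with $A\tens A$ with spectrum in $\Bar{\Gamma}$, and $\bal(f(b))=f(\Delta(b))$ by functional calculus. For $f$ from the chosen dense subalgebra I would write
\begin{equation*}
f(\Delta(b))=\int_\Gamma\widehat{f}(\gamma)\,\chi(\Delta(b),\gamma)\,d\gamma,
\end{equation*}
a norm-convergent integral thanks to the compact support of $\widehat{f}$. The bicharacter property of $\chi$ applied to the $\dplus$-sum (cf.~\cite{azb,nazb,pu-soH}), together with the crossed-product commutation relation $\chi(a,\gamma)b\chi(a,\gamma)^*=\gamma b$, should let me rewrite $(c\tens\I)\chi(\Delta(b),\gamma)$ as a continuous, compactly supported family $\gamma\mapsto c_\gamma\tens h_\gamma(b)\in A\tens B$. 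The identity
\begin{equation*}
(c\tens\I)\bal(f(b))=\int_\Gamma\widehat{f}(\gamma)\,c_\gamma\tens h_\gamma(b)\,d\gamma
\end{equation*}
would then exhibit $(c\tens\I)\bal(f(b))$ as a norm-convergent $A\tens B$-valued integral, giving the conclusion.

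The hard part will be justifying these functional-calculus manipulations rigorously at the level of the unbounded affiliated element $\Delta(b)\aff A\tens A$. Specifically, one must establish that the Fourier-type decomposition of $f(\Delta(b))$ converges in norm in $\M(A\tens A)$ for $f$ in the chosen dense subalgebra, and that the bicharacter relations for $\chi$ together with the commutation relations between $a$ and $b$ effect the required separation of variables inside the integrand. This is precisely where the theory of \cite{gen} and the detailed analysis of the quantum ``$az+b$'' group from \cite{azb,nazb,pu-soH} enter essentially; because $\GG$ is non-regular, no general density result can substitute for this direct analysis.
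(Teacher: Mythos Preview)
Your proposal has a genuine gap at the ``separation of variables'' step. The bicharacter $\chi(\cdot,\gamma)$ is a \emph{multiplicative} homomorphism $\Gamma\to\TT$; it does not extend continuously to $\Bar{\Gamma}$ (for instance $\chi(z,q)=\Phase{z}$ has no limit as $z\to 0$), so the expression $\chi\bigl(\Delta(b),\gamma\bigr)$ is not defined by the functional calculus for the normal operator $\Delta(b)$, whose spectrum contains~$0$. More fundamentally, $\chi$ converts products into products, not $\dplus$-sums into products: there is no identity of the form $\chi(R\dplus S,\gamma)=\chi(R,\gamma)\chi(S,\gamma)$, so the phrase ``the bicharacter property of $\chi$ applied to the $\dplus$-sum'' has no content. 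Consequently a Fourier expansion $f(b)=\int\widehat{f}(\gamma)\chi(b,\gamma)\,d\gamma$ cannot be set up for $f\in\C_0(\Bar{\Gamma})$, and the claimed factorisation of $(c\tens\I)\chi\bigl(\Delta(b),\gamma\bigr)$ as a simple tensor $c_\gamma\tens h_\gamma(b)$ has no basis.

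The paper's proof avoids all of this by treating the second tensor leg as a scalar parameter rather than decomposing $f$. Identifying $A\tens{B}\cong\C_0(\Bar{\Gamma},A)$, the element $(c\tens\I)\bal\bigl(f(b)\bigr)$ becomes the function $z\mapsto c\,f(az\dplus b)$, already known to lie in $\C_b(\Bar{\Gamma},A)$; one only needs its norm to vanish as $z\to\infty$. The correct ``exponential'' for $\dplus$ is the special function $\Fq\colon\Bar{\Gamma}\to\TT$ of \cite{opeq,azb,nazb}, which yields the conjugation $f(az\dplus b)=\Fq(z^{-1}ba^{-1})^*f(az)\Fq(z^{-1}ba^{-1})$. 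Since $\Fq(z^{-1}ba^{-1})\to\I$ strictly as $z\to\infty$, the question reduces to showing $\|cf(az)\|\to 0$, which is elementary once one takes $c=\tilde{f}(b)g(a)$ with $g\in\C_0(\Gamma)$ and $f$ compactly supported. It is $\Fq$, not $\chi$, that links $\dplus$ to multiplication; your outline would need to be rebuilt around it.
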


\begin{proof}
We shall identify $B$ with $\C_0(\Bar{\Gamma})$ and $A\tens{B}$ with $A\tens\C_0(\Bar{\Gamma})\cong\C_0(\Bar{\Gamma},A)$. The element $b\aff{B}$ corresponds to the function $\Bar{\Gamma}\ni{z}\mapsto{z}\in\CC$ and $\bal(b)$ to
\begin{equation*}
\Bar{\Gamma}\ni{z}\longmapsto(az\dplus{b})\aff{A}.
\end{equation*}
(cf.~\cite[Formula 2.6]{gen}).

It follows that for any $f\in\C_0(\Bar{\Gamma})$ the image of $f(b)$ under $\bal$ is identified with the element
\begin{equation*}
\Bar{\Gamma}\ni{z}\longmapsto{f}(az\dplus{b})
\end{equation*}
of $\C_b^{\,\text{\tiny strict}}\bigl(\Bar{\Gamma},M(A)\bigr)$. Therefore with $x=f(b)$ the element \eqref{cbalx} corresponds to the function
\begin{equation*}
\Bar{\Gamma}\ni{z}\longmapsto{c}f(az\dplus{b})
\end{equation*}
which is an element of $\C_b(\Bar{\Gamma},A)$. We want to show that this is in fact an element of $\C_0(\Bar{\Gamma},A)=\C_0(\Bar{\Gamma})\tens{A}$.

According to results \cite{opeq,azb,nazb} (again the reference depends on the choice of $q$) there is a continuous function $\Fq:\Bar{\Gamma}\to\TT$ such that $az\dplus{b}=\Fq(z^{-1}ba^{-1})^*az\Fq(z^{-1}ba^{-1})$, so that
\begin{equation}\label{dokladnie}
f(az\dplus{b})=\Fq(z^{-1}ba^{-1})^*f(az)\Fq(z^{-1}ba^{-1}).
\end{equation}
Our aim is to show that for any fixed $c\in{A}$ the norm of 
\begin{equation*}
c\Fq(z^{-1}ba^{-1})^*f(az)\Fq(z^{-1}ba^{-1})
\end{equation*}
tends to $0$ as $z$ goes to $\infty$ of $\Bar{\Gamma}$. Clearly this norm is equal to the norm of
\begin{equation*}
c\Fq(z^{-1}ba^{-1})^*f(az).
\end{equation*}
Since $ba^{-1}\aff{A}$, it is known (see e.g.~\cite[Theorem 5.1]{azb}) that $\Fq(z^{-1}ba^{-1})$ tends to $\I$ strictly in $\M(A)$ az $z\to\infty$ (note that $\Fq(0)=1$). Therefore for any $\eps>0$ there exists a constant $R_\eps$ such that
\begin{equation*}
\bigl\|c\Fq(z^{-1}ba^{-1})^*-c\bigr\|<\eps.
\end{equation*}
for all $z\in\Bar{\Gamma}$ with $|z|>{R_\eps}$. Thus the norm
\begin{equation*}
\bigl\|c\Fq(z^{-1}ba^{-1})^*f(az)\bigr\|\leq
\bigl\|cf(az)\bigr\|+\bigl\|\bigl(c\Fq(z^{-1}ba^{-1})^*-c\bigr)f(az)\bigr\|<
\bigl\|cf(az)\bigr\|+\eps\|f\|.
\end{equation*}
To see that the norm of $cf(az)$ goes to zero as $z\to\infty$ we can assume that $f$ has compact support and that $c$ is of the form
\begin{equation*}
c=\tilde{f}(b)g(a)
\end{equation*}
for some $\tilde{f}\in\C_0(\Bar{\Gamma})$ and $g\in\C_0(\Gamma)$ (the set of such elements is linearly dense in $A$). Then
\begin{equation*}
cf(za)=\tilde{f}(b)G_z(a),
\end{equation*}
where $G_z(z')=g(z')f(zz')$. Since $g$ vanishes at $0$, it is easy to see that $\|G_z\|\xrightarrow[z\to\infty]{}0$ and it follows that $\bigl\|cf(za)\bigr\|<\eps$ for sufficiently large $|z|$.
\end{proof}

\section{The action of the quantum $\mathrm{E}(2)$ group on its homogeneous space}\label{E2sect}

For the description of the quantum $\mathrm{E}(2)$ group we will fix a Hilbert space $\cH$ with an orthonormal basis $(e_{i,j})_{i,j\in\ZZ}$. We will change our notation slightly in order to be in agreement with conventions established in the literature \cite{E2,contr}.
Let $q\in]0,1[$ be a parameter and let $\CC^q$ be the subgroup of $\CC\setminus\{0\}$ generated by $q$ and $\{q^{\ii{t}}\st{t}\in\RR\}$ --- this is $\Gamma$ from Section \ref{azbSect} for the case of real $q$. As before we let $\Bar{\CC^q}$ be the closure of $\CC^q$ in $\CC$.

We will now define two operators on $\cH$. We let $v$ be the unitary operator defined uniquely by $ve_{i,j}=e_{i-1,j}$ for a $i,j\in\ZZ$ and let $n$ be the closed linear operator on $\cH$ such that the linear span of the orthonormal basis $(e_{i,j})_{i,j\in\ZZ}$ is a core of $n$ and $ne_{i,j}=q^ie_{i,j+1}$ for all $i,j\in\ZZ$. Then $n$ is a normal operator with $\spec{n}\subset\Bar{\CC^q}$. We define $A$ as the closure of the set of finite linear combinations
\begin{equation}\label{sumy1}
\sum{f_l}(n)v^l,
\end{equation}
where $f_l\in\C_0(\Bar{\CC^q})$ for all $l\in\ZZ$. With this definition $A$ is a non-degenerate \cst-subalgebra of $\B(\cH)$ isomorphic to $\C_0(\Bar{\CC^q})\rtimes_\beta\ZZ$, where $\beta$ is the natural action by multiplication by $q$ (cf.~\cite{E2}). Moreover $v\in\M(A)$ and $n\aff{A}$ and $A$ is generated by $n$ and $v$ (\cite[Example 4]{gen}).

It is shown in \cite{E2} that there exists a unique $\Delta\in\Mor(A,A\tens{A})$ such that
\begin{equation*}
\Delta(v)=v\tens{v},\qquad\Delta(n)=v\tens{n}\dplus{n}\tens{v^*}
\end{equation*}
and $\GG=(A,\Delta)$ is a non-regular locally compact quantum group (\cite{baaj}) called the \emph{quantum $\mathrm{E}(2)$ group.}

The classical group $\TT$ is a subgroup of $\GG$, i.e.~we have a quantum group morphism $\pi\in\Mor\bigl(A,\C(\TT)\bigr)$ defined uniquely by putting $\pi(n)=0$ and letting $\pi(v)$ be the canonical unitary generator of $\C(\TT)$. We wish to describe the quantum homogeneous space $\GG/\TT$.

To that end let us define structure of a $\ZZ$-product on $A$ different from the standard one (arising from the fact that $A$ is the crossed product of $\C_0(\Bar{\CC^q})$ by $\ZZ$). We define a representation
\begin{equation*}
V:\ZZ\ni{n}\longmapsto{V_n}=v^n\in\M(A)
\end{equation*}
and action of $\TT$ on $A$ by automorphisms $(\tilde{\beta}_\mu)_{\mu\in\TT}$ such that
\begin{equation*}
\tilde{\beta}_\mu(v)=\mu{v},\qquad\tilde{\beta}_\mu(n)=\mu^{-1}n
\end{equation*}
for all $\mu\in\TT$. The triple $(A,V,\tilde{\beta})$ is a $\ZZ$-product. This $\ZZ$-product structure on $A$ is not a ``twist'' of the standard $\ZZ$-product structure on $A$ as defined in \cite{kasprzak}, but let us note that if we denote by $\hat{\beta}$ the action of $\TT$ on $A$ dual to $\beta$ then for each $\mu\in\TT$.
\begin{equation*}
\tilde{\beta}_\mu(y)=U_\mu\hat{\beta}_\mu(y)U_\mu^*,
\end{equation*}
where $U_\mu$ is the unitary operator such that $U_\mu{e}_{i,j}=\mu^{-j}e_{i,j}$.

Following the example from Section \ref{azbSect} we define the \cst-algebra of $B$ of continuous functions vanishing at infinity on $\GG/\TT$ as the set of those $x\in\M(A)$
\begin{enumerate}
\item\label{lc21} $(\id\tens\pi)\Delta(x)=x\tens\id$,
\item\label{lc22} for any $y\in\cst(\ZZ)\subset\M(A)$ the element $yx\in{A}$.
\end{enumerate}

The above conditions are, as in Section \ref{azbSect}, precisely the Lanstad conditions for the $\ZZ$-product structure we have defined (in particular $B$ really is a \cst-algebra). Condition \ref{lc21} is exactly the condition of being $\tilde{\beta}$-invariant, while condition \ref{lc22} says that the ``function'' $x$ vanishes at infinity in the direction transversal to that of the subgroup $\TT$. In fact condition \eqref{lc22} is equivalent to demanding that $x\in{A}$ (because $\cst(\ZZ)$ is unital). The third Lanstad condition is empty because $\ZZ$ is a discrete group.

Before proceeding let us define a certain \cst-algebra which will turn out to be the algebra of continuous unctions vanishing at infinity on $\GG/\TT$. Let $\C_0(\ZZ\cup\{+\infty\})$ be the \cst-algebra of sequences $(x_n)_{n\in\ZZ}$ for which $\lim\limits_{n\to-\infty}x_n=0$ and $\lim\limits_{n\to+\infty}x_n$ exists and is finite. We let $\cC$ be the crossed product $\cC=\C_0(\ZZ\cup\{+\infty\})\rtimes\ZZ$, where the action of $\ZZ$ is by translation. The \cst-algebra $\cC$ is an extension of $\cK$ by $\C(\TT)$. More precisely we have
\begin{equation*}
\cC\cong\biggl\{
\begin{bmatrix}
x&y\\z&u
\end{bmatrix}
\st{x\in\cT},\;y,z,u\in\cK\biggr\}
\end{equation*}
(where $\cT$ is the Toeplitz algebra) and the extension is
\begin{equation*}
\xymatrix{
0\ar[r]&\cK\cong{M_2(\cK)}\ar[r]&\cC\ar[r]^\rho&\C(\TT)\ar[r]&0
}
\end{equation*}
with $\rho$ sending a matrix
$\bigl[
\begin{smallmatrix}
x&y\\z&u
\end{smallmatrix}
\bigr]$
to the symbol of $x$.

\begin{prop} Let $B$ be the algebra of continuous functions vanishing at infinity on $\GG/\TT$. Then
\begin{enumerate}
\item\label{p1} The \cst-algebra $B$ is the closure of the set of finite sums of the form
\begin{equation}\label{sumy}
\sum{f_k}(n)v^k
\end{equation}
where for each $k\in\ZZ$ the function $f_k\in\C_0(\Bar{\Gamma})$ is such that
\begin{equation*}
f_k(\mu{z})=\mu^kf_k(z)
\end{equation*}
for all $z\in\Bar{\Gamma}$ and $\mu\in\TT$.
\item\label{p2} The operator $vn$ is affiliated with $B$,
\item $B$ is generated by $vn$.
\item\label{p4} $B$ is isomorphic to $\cC$.
\end{enumerate}
\end{prop}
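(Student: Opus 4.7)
Plan: The excerpt already observes that $B=A^{\tilde{\beta}}$ (condition (i) is $\tilde{\beta}$-invariance, while (ii) reduces to $x\in A$ by the unitality of $\cst(\ZZ)$), so the whole proposition is about the structure of this fixed-point subalgebra.

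For part (1), use the conditional expectation
\begin{equation*}
P\colon A\longrightarrow B,\qquad P(x)=\int_{\TT}\tilde{\beta}_{\mu}(x)\,d\mu.
\end{equation*}
On a typical generator $f(n)v^{k}$ of $A$, the identity $\tilde{\beta}_{\mu}\bigl(f(n)v^{k}\bigr)=\mu^{k}f(\mu^{-1}n)v^{k}$ yields $P\bigl(f(n)v^{k}\bigr)=\tilde{f}(n)v^{k}$, where $\tilde{f}(z)=\int_{\TT}\mu^{k}f(\mu^{-1}z)\,d\mu$ is the $k$-th rotational Fourier component of $f$ and automatically satisfies $\tilde{f}(\mu z)=\mu^{k}\tilde{f}(z)$. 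Surjectivity of $P$ onto $B$, combined with the density in $A$ of sums of the shape \eqref{sumy1}, delivers the description in part (1).

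For parts (2) and (3), observe that $vn$ is closed and densely defined on $\cH$ and is $\tilde{\beta}$-invariant since $\tilde{\beta}_{\mu}(vn)=(\mu v)(\mu^{-1}n)=vn$. A direct calculation gives $(vn)^{*}(vn)=n^{*}n$, so the $z$-transform $z_{vn}=vn(\I+n^{*}n)^{-1/2}$ makes sense; because $n\aff A$ delivers $(\I+n^{*}n)^{-1/2}\in\M(A)$ and $n(\I+n^{*}n)^{-1/2}\in\M(A)$, we get $z_{vn}\in\M(A)$. Being a product of $\tilde{\beta}$-invariant multipliers of $A$, $z_{vn}$ actually lies in $\M(B)$, which is the affiliation criterion of \cite{gen} for $vn\aff B$. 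For part (3), $z_{vn}^{*}z_{vn}=n^{*}n(\I+n^{*}n)^{-1}$ is a function of $n^{*}n$, so functional calculus realises all $\C_{0}$-functions of $n^{*}n$ (whose spectrum is $q^{2\ZZ}\cup\{0\}$) inside the $\cst$-subalgebra of $B$ generated by $z_{vn}$; multiplying these by $z_{vn}$ itself reproduces every summand $f_{k}(n)v^{k}$ of part (1), so $vn$ generates $B$.

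For part (4), represent $A$ faithfully on $\cH$ and decompose $\cH=\bigoplus_{p\in\ZZ}\cH_{p}$ with $\cH_{p}=\overline{\spn}\{e_{i,p-i}:i\in\ZZ\}$; each $\cH_{p}$ is invariant under $vn$ and $(vn)^{*}$ because $vn\,e_{i,j}=q^{i}e_{i-1,j+1}$ preserves $i+j$, hence by (3) under $B$. On $\cH_{p}\cong\ell^{2}(\ZZ)$, $vn$ is the bilateral weighted shift with weights $(q^{i})_{i\in\ZZ}$, with polar decomposition $vn=U|n|$ where $|n|$ is diagonal with eigenvalues $q^{i}$ and $U$ is the bilateral shift. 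Since $q^{2i}\to 0$ as $i\to+\infty$ and $q^{2i}\to+\infty$ as $i\to-\infty$, functional calculus of $|n|^{2}$ realises precisely $\C_{0}(\ZZ\cup\{+\infty\})$ as the diagonal subalgebra; together with $U$, recovered from $z_{vn}$ in the limit $i\to-\infty$, this generates the crossed product $\C_{0}(\ZZ\cup\{+\infty\})\rtimes\ZZ=\cC$, and since all the $\cH_{p}$'s give unitarily equivalent representations of $B$, one concludes $B\cong\cC$. The main obstacle is part (2): a direct computation gives $(vn)(vn)^{*}=q^{2}(vn)^{*}(vn)$, so $vn$ is \emph{not} normal and one must invoke the asymmetric $z$-transform formulation of affiliation from \cite{gen} rather than ordinary normal functional calculus; a related subtlety for (3)--(4) is that neither $v$ nor $n$ individually lies in $B$, so one must verify that the single non-normal operator $vn$ really carries enough information to both generate $B$ and implement the isomorphism with $\cC$.
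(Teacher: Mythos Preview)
Your approach is sound overall and in places more explicit than the paper's, but there is one genuine omission and one vague step.

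\textbf{Part (1).} Your averaging/conditional-expectation argument is correct and is the natural ``compact subgroup'' proof. The paper instead checks that the candidate subalgebra $B_0$ is $v$-conjugation invariant and that $\cst(\ZZ)B_0\cst(\ZZ)$ is dense in $A$, then invokes Landstad's characterisation via \cite[Lemma~2.6]{kasprzak}. Your route is more elementary and self-contained; the paper's route ties the result to the $\ZZ$-product structure already set up.

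\textbf{Part (2).} Here there is a gap. Showing $z_{vn}\in\M(B)$ is only half of the affiliation criterion in \cite{gen}: you must also verify that $(\I-z_{vn}^*z_{vn})^{1/2}B$ is dense in $B$, equivalently that $\I-z_{vn}^*z_{vn}=(\I+n^*n)^{-1}$ is strictly positive in $B$. The paper states this explicitly. It is not hard (the function $z\mapsto(1+|z|^2)^{-1}$ lies in $\C_0(\Bar{\CC^q})$, is $\TT$-invariant, and is nowhere zero on $\Bar{\CC^q}$), but it is essential and your sentence ``which is the affiliation criterion'' skips it.

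\textbf{Part (3).} Your plan is to show directly that the \cst-subalgebra determined by $z_{vn}$ contains every $f_k(n)v^k$. This is workable but your sketch is incomplete: the single product $z_{vn}\cdot g(n^*n)$ only produces $k=1$ terms, and to reach general $k$ you must iterate and use the commutation $vn=qnv$, which you do not spell out. The paper takes a cleaner route: it writes $vn=U|X|$ with $Ue_{i,j}=e_{i-1,j+1}$, $|X|e_{i,j}=q^ie_{i,j}$, records the identity $f_k(n)v^k=f_k(|X|)U^k$, and then observes that $vn$ separates representations of $B$, so \cite[Theorem~3.3]{gen} applies directly. That identity is exactly the computation you would need to complete your argument, and the separation-of-representations criterion avoids having to exhaust $B_0$ by hand.

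\textbf{Part (4).} Your decomposition $\cH=\bigoplus_p\cH_p$ along $i+j=p$ and the identification of $vn$ on each summand with the bilateral $q$-weighted shift is a perfectly good proof; it makes concrete what the paper dismisses as ``a moment of reflection'' from the same polar-decomposition data. One caution: $U$ is only in $\M(B)$, not in $B$, so ``recovered from $z_{vn}$ in the limit $i\to-\infty$'' should be read as recovering $U$ as a multiplier, matching the fact that the canonical unitary implementing the $\ZZ$-action sits in $\M(\cC)$ rather than $\cC$.
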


\begin{proof}
We shall first prove point \eqref{p1}. Let $B_0$ be the closure of the set of finite sums described in Statement \eqref{p1}. Since each term $f_k(n)v^n$ is invariant under $\tilde{\beta}$ and is already contained in $A$ (as opposed to $\M(A)$) we have $B_0\subset{B}$. Moreover $B_0$ is invariant under conjugation with $v$. Finally let us note that
\begin{equation}\label{dense}
\cst(\ZZ)B_0\cst(\ZZ)
\end{equation}
is dense in $A$ (as before, we identify $\cst(\ZZ)$ with the \cst-subalgebra of $\M(A)$ generated by $v$). This is because the functions $f_k$ appearing in \eqref{sumy} are trigonometric monomials along the individual circles of $\Bar{\CC^q}$. Therefore we can uniformly approximate on each circle sufficiently regular functions (e.g.~twice differentiable) and those are dense in all continuous functions on the circle. This can be done on each circle separately. As a result all elements of the form \eqref{sumy1} lie in the closure of \eqref{dense}. By \cite[Lemma 2.6]{kasprzak} $B_0=B$.

Now we shall deal with points \eqref{p2}--\eqref{p4}. The operator $vn$ is affiliated with $B$. This is because the $z$-transform of $vn$ clearly belongs to $B_0$ and the element $\I-z_{vn}^*z_{vn}=(\I+n^*n)^{-1}$ is a strictly positive element of $B$ (cf.~\cite[Section 1]{gen}). Let us denote by $X$ the operator $vn$ and let $X=U|X|$ be the polar decomposition of $X$. We have $Ue_{i,j}=e_{i-1,j+1}$ and $|X|e_{i,j}=q^ie_{i,j}$. One easily finds that for $f_k\in\C_0(\Bar{\CC^q})$ such that $f_k(\mu{z})=\mu^{k}f_k(z)$ we have
\begin{equation}\label{przezX}
f_k(n)v^k=f_k\bigl(|X|\bigr)U^k.
\end{equation}
A moment of reflection shows now that $B\cong\cC$ and point \eqref{p4} follows.

Let $\rho$ be a representation of $B$ on a Hilbert space $\cL$. Then $\rho(vn)$ is a closed operator on $\cL$. Moreover by \eqref{przezX} the image under $\rho$ of any element $x\in{B_0}$ is determined by the polar decomposition of $\rho(vn)$. This means that $vn$ separates representations of $B$ and it follows that $B$ is generated by $vn$ by \cite[Theorem 3.3]{gen}, since $\bigl(\I+(vn)^*(vn)\bigr)^{-1}\in{B}$.
\end{proof}

Before proceeding let us emphasize the fact that $B\subset{A}$. Moreover since $B\subset{A}$ we have that
\begin{equation*}
\begin{split}
B&=\bigl\{x\in{A}\st\tilde{\beta}_\mu(x)=x\text{ for all }\mu\in\TT\bigr\}\\
&=\bigl\{x\in{A}\st(\id\tens\pi)\Delta(x)=x\tens\I\bigr\}.
\end{split}
\end{equation*}

We also have

\begin{lem}\label{inv}
$A\tens{B}=\bigl\{X\in{A\tens{A}}\st(\id\tens\tilde{\beta}_\mu)(X)=X\;\text{\rm{}for all}\;\mu\in\TT\bigr\}$.
\end{lem}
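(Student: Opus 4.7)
The plan is to realise $B$ as the range of a conditional expectation obtained by averaging the $\TT$-action $\tilde{\beta}$, and to lift this conditional expectation to $A\tens A$. One inclusion of the lemma is immediate: if $X=\sum a_i\tens b_i\in A\tens B$ (algebraic tensors, then closure), every $b_i$ is fixed by $\tilde{\beta}_\mu$, so $(\id\tens\tilde{\beta}_\mu)(X)=X$ for all $\mu\in\TT$. The substance is the reverse inclusion, and here I would argue as follows.

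First, I would check that the action $\TT\ni\mu\mapsto\tilde{\beta}_\mu\in\Aut(A)$ is point-norm continuous. On a generating element $f(n)v^k$ one computes $\tilde{\beta}_\mu\bigl(f(n)v^k\bigr)=\mu^kf(\mu^{-1}n)v^k$, and continuity of $\mu\mapsto f(\mu^{-1}n)$ in norm follows from uniform continuity of $f\in\C_0(\Bar{\CC^q})$ on compact sets (by approximation first assume $f$ has compact support). Since finite sums of the form $\sum f_l(n)v^l$ are dense in $A$ and the set of $x\in A$ for which $\mu\mapsto\tilde{\beta}_\mu(x)$ is norm-continuous is closed, point-norm continuity holds on all of $A$. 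With continuity in hand, the Bochner integral
\begin{equation*}
E(x)=\int_{\TT}\tilde{\beta}_\mu(x)\,d\mu,\qquad x\in A,
\end{equation*}
defines a contractive completely positive $\CC$-linear map. Translation invariance of Haar measure shows that $E(x)$ is $\tilde{\beta}$-invariant, so $E(A)\subset B$ by the characterisation of $B$ recalled just above the lemma. Conversely $E|_B=\id_B$, so $E$ is a conditional expectation of $A$ onto $B$.

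Next I would promote $E$ to $A\tens A$. The same continuity argument applied to simple tensors $a\tens b\in A\tens A$ (and then closure) shows that $\mu\mapsto(\id\tens\tilde{\beta}_\mu)(X)$ is norm-continuous for every $X\in A\tens A$, so
\begin{equation*}
F(X)=\int_{\TT}(\id\tens\tilde{\beta}_\mu)(X)\,d\mu
\end{equation*}
is a well-defined contractive completely positive idempotent whose range is precisely the fixed-point algebra of $\id\tens\tilde{\beta}$. On simple tensors $F(a\tens b)=a\tens E(b)=(\id\tens E)(a\tens b)$, so by linearity and norm-continuity $F=\id\tens E$ on $A\tens A$. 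Since $E(A)=B$ and $E|_B=\id$, the range of $\id\tens E$ equals $A\tens B$. Therefore, if $X\in A\tens A$ satisfies $(\id\tens\tilde{\beta}_\mu)(X)=X$ for all $\mu\in\TT$, then $X=F(X)=(\id\tens E)(X)\in A\tens B$, completing the argument.

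The only non-routine point is the interchange $\int_{\TT}(\id\tens\tilde{\beta}_\mu)(X)\,d\mu=(\id\tens E)(X)$ for arbitrary $X\in A\tens A$ (as opposed to elementary tensors), which I expect to be the main technical obstacle; it is handled by combining the density of algebraic tensors in $A\tens A$ with the uniform bound $\|\id\tens\tilde{\beta}_\mu\|=1$, so that both $F$ and $\id\tens E$ are bounded and agree on a dense set. Everything else is a standard compact-group averaging argument.
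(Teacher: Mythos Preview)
Your proposal is correct and is essentially the same argument as the paper's: both prove the nontrivial inclusion by averaging over $\TT$ with the Haar measure, obtaining a conditional expectation onto $B$ and applying it in the second leg. The paper is more terse---it approximates $X$ by finite sums of simple tensors and pushes the integral through the limit directly---whereas you organize the same computation via the maps $E$ and $F=\id\tens E$ and spell out the point-norm continuity needed for the Bochner integral; but the underlying idea and the key step (that the range of the averaging lies in $A\tens B$) are identical.
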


\begin{proof}
The inclusion ``$\subset$'' is obvious. For the converse one let us take $X\in{A}\tens{A}$ such that $(\id\tens\tilde{\beta}_\mu)(X)=X$ for all $\mu\in\TT$. $X$ is a limit of finite sums of simple tensors:
\begin{equation*}
X=\lim_{k\to\infty}\sum_{i=1}^{N_k}a_i^{(k)}\tens{}b_i^{(k)}.
\end{equation*}
Let $d\mu$ be the normalized Haar measure on $\TT$. We have
\begin{equation*}
X=\int(\id\tens\tilde{\beta}_\mu)(X)\,d\mu
=\lim_{k\to\infty}\sum_{i=1}^{N_k}a_i^{(k)}\tens\int\tilde{\beta}_\mu(b_i^{(k)})\,d\mu
\end{equation*}
Clearly the elements $c_i^{(k)}=\int\tilde{\beta}_\mu(b_i^{(k)})\,d\mu$ are $\tilde{\beta}$-invariant, so that they belong to $B$. It follows that $X\in{A}\tens{B}$.
\end{proof}

Now let us describe the action of $\GG$ on its homogeneous space $\GG/\TT$. The image of $vn\aff{A}$ under $\Delta$ is
\begin{equation*}
v^2\tens{vn}\dplus{vn}\tens\I.
\end{equation*}
By \cite[Theorem 6.1]{wor-nap} and \cite[Theorem 5.1]{opeq} this element is affiliated with $A\tens{B}$. Since $vn$ generates $B$, we see that the map $\bal=\bigl.\Delta\bigr|_B$ is a morphism form $B$ to $A\tens{B}$. Clearly we have
\begin{equation*}
(\Delta\tens\id)\comp\bal=(\id\tens\bal)\comp\bal
\end{equation*}
and $(\epsilon\tens\id)\comp\bal=\id$, where $\epsilon$ is the counit of $\GG$ (defined by $\epsilon(v)=1$ and $\epsilon(n)=0$).

Since $B\subset{A}$ we have $\bal(B)\subset{A}\tens{A}$ by \cite[Formula (57)]{contr}. Note also that
\begin{equation}\label{shift}
(\id\tens\tilde{\beta}_\mu)\comp\Delta=\Delta\comp\tilde{\beta}_\mu.
\end{equation}
In view of Lemma \ref{inv}, this implies that in fact $\bal(x)\in{A}\tens{B}$ for any $x\in{B}$. In particular for any $c\in{A}$ and $x\in{B}$ we have $(c\tens\I)\bal(x)\in{A}\tens{B}$. Thus we obtain

\begin{cor}
$\bal\in\Mor(B,A\tens{B})$ is a continuous action of $\GG$ on $\GG/\TT$.
\end{cor}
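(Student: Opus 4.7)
The plan is to observe that this corollary is essentially a bookkeeping statement: every condition in the definition of a continuous action of $\GG$ on $B$ that was formulated at the end of Section \ref{first} has already been verified in the paragraph immediately preceding the statement, so the proof amounts to collecting them in order.

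First, I would record that $\bal \in \Mor(B, A \tens B)$ by combining two facts from the preceding discussion: the image $\Delta(vn) = v^2 \tens vn \dplus vn \tens \I$ is affiliated with $A \tens B$ (via \cite[Theorem 6.1]{wor-nap} and \cite[Theorem 5.1]{opeq}), and $B$ is generated by $vn$. Then, because $\bal = \bigl.\Delta\bigr|_B$ is obtained by restricting $\Delta$, the coassociativity identity $(\Delta \tens \id) \comp \bal = (\id \tens \bal) \comp \bal$ and the counit identity $(\epsilon \tens \id) \comp \bal = \id$ follow directly from the coassociativity and counit property of $\Delta$, together with the specification $\epsilon(v) = 1$, $\epsilon(n) = 0$.

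The only condition needing a sentence of argument is $(c \tens \I) \bal(x) \in A \tens B$ for all $c \in A$ and $x \in B$. For this I would apply Lemma \ref{inv}: since $x \in B$ implies $\tilde\beta_\mu(x) = x$ for all $\mu \in \TT$, the intertwining relation \eqref{shift} gives $(\id \tens \tilde\beta_\mu) \bal(x) = \bal\bigl(\tilde\beta_\mu(x)\bigr) = \bal(x)$. By Lemma \ref{inv} this forces $\bal(x) \in A \tens B$, and the inclusion is preserved under left multiplication by $c \tens \I \in \M(A \tens B)$. With all three conditions verified, $\bal$ is a continuous action.

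There is no real obstacle here: the substantive work was done in the proof of the foregoing proposition (which identifies $B$ and affiliates $vn$ to it), in Lemma \ref{inv} (the $\TT$-invariant part of $A \tens A$ is $A \tens B$), and in establishing the intertwining relation \eqref{shift}. The corollary is the natural synthesis of these ingredients, so the proof should be only two or three lines.
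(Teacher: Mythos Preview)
Your proposal follows exactly the paper's route: the corollary has no separate proof in the paper, and the preceding paragraph is precisely the synthesis you describe (morphism via the generator $vn$, coassociativity and counit by restriction, then \eqref{shift} plus Lemma \ref{inv}).

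There is, however, one small but genuine gap. To invoke Lemma \ref{inv} you must first know that $\bal(x)\in A\tens A$, since that lemma only identifies the $\tilde\beta$-invariant elements \emph{inside} $A\tens A$. Knowing $\bal\in\Mor(B,A\tens B)$ only gives $\bal(x)\in\M(A\tens B)\subset\M(A\tens A)$, which is not enough. The paper closes this gap with the sentence ``Since $B\subset A$ we have $\bal(B)\subset A\tens A$ by \cite[Formula (57)]{contr}''; this is the missing ingredient in your outline. Once $\bal(x)\in A\tens A$ is established, your application of \eqref{shift} and Lemma \ref{inv} goes through verbatim, and the rest is correct.
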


\begin{rem}\label{drm}
Let us also note that the set of all elements $(c\tens\I)\bal(x)$ with $c\in{A}$ and $x\in{B}$ is not only contained (as shown above), but also linearly dense in $A\tens{B}$. Indeed if we denote by $E$ the conditional expectation $A\ni{c}\mapsto\int\tilde{\beta}_\mu\,d\mu\in{B}$ used in the proof of Lemma \ref{inv} then it follows from \eqref{shift} that $(\id\tens{E})\comp\Delta=\Delta\comp{E}$. Using the fact that the span of $\bigl\{(a\tens\I)\Delta(b)\st{a,b}\in{A}\bigr\}$ is dense in $A\tens{A}$ we see that
\begin{equation*}
\begin{split}
\mathrm{span}\,&\bigl\{(c\tens\I)\bal(x)\st{c}\in{A},\;x\in{B}\bigr\}\\
&=\mathrm{span}\,\bigl\{(c\tens\I)\Delta(x)\st{c}\in{A},\;x\in{B}\bigr\}\\
&=\mathrm{span}\,\bigl\{(c\tens\I)\Delta\bigl(E(d)\bigr)\st{c,d}\in{A}\bigr\}\\
&=\mathrm{span}\,\bigl\{(c\tens\I)(\id\tens{E})\bigl(\Delta(d)\bigr)\st{c,d}\in{A}\bigr\}\\
&=\mathrm{span}\,\bigl\{(\id\tens{E})\bigl((c\tens\I)\Delta(d)\bigr)\st{c,d}\in{A}\bigr\}
\end{split}
\end{equation*}
is dense in $A\tens{B}$ (cf.~Lemma \ref{inv}).
\end{rem}

\section{Quotients by compact subgroups}

A pair $\GG=(A,\Delta)$ consisting of a \cst-algebra $A$ and a coassociative $\Delta\in\Mor(A,A\tens{A})$ such that
\begin{subequations}\label{simplif}
\begin{align}
\overline{\spn{\bigl\{\Delta(a)(\I\tens{a'})\st{a,a'}\in{A}\bigr\}}}&=A\tens{A},\label{AC}\\
\overline{\spn{\bigl\{(a\tens\I)\Delta(a')\st{a,a'}\in{A}\bigr\}}}&=A\tens{A}.\label{AP}
\end{align}
\end{subequations}
is usually called a bisimplifiable Hopf \cst-algebra (see e.g.~\cite{bs}), while in \cite{mnw} the such objects are called ``proper \cst-bialgebras with cancellation property.'' We will stick to the former terminology.

This section is devoted to the proof of the next theorem which is a direct generalization of the construction in \cite{spheres}[Section 6], \cite[Section 1]{podles} (cf.~also \cite{boca}) to the situation where the original quantum group (or Hopf \cst-algebra) $\GG$ is not compact. We include the assumption that $\GG$ possesses a continuous counit in order to fit the scheme of earlier sections. In case $\GG$ does not have a counit the all statements of Theorem \ref{podlthm} (except \eqref{podlthm3}) are still true.

Note also that many of the results of Section \ref{E2sect} are in fact consequences of Theorem \ref{podlthm}, but the proofs given in Section \ref{E2sect} are different and independent of the next result.

\begin{thm}\label{podlthm}
Let $\GG=(A,\Delta)$ be a bisimplifiable Hopf \cst-algebra with a continuous counit $\epsilon$ and let $\KK=(C,\Delta_C)$ be a compact quantum group. Let $\pi$ be a $*$-homomorphism $A\to{C}$ which is surjective and satisfies
\[
(\pi\tens\pi)\comp\Delta=\Delta_C\comp\pi.
\]
Let $B=\bigl\{x\in{A}\st(\id\tens\pi)\Delta(x)=x\tens\I\bigr\}$. Then
\begin{enumerate}
\item\label{podlthm1} $B$ is a nondegenerate \cst-subalgebra of $A$,
\item\label{podlthm2} the map $\bal=\bigl.\Delta|_B$ has values in $\M(A\tens{B})$ and belongs to $\Mor(A,A\tens{B})$,
\item\label{podlthm3} $(\epsilon\tens\id)\comp\bal=\id$,
\item\label{podlthm4} for any $b\in{B}$ and $a\in{A}$ we have $(a\tens\I)\bal(b)\in{A\tens{B}}$.
\end{enumerate}
\end{thm}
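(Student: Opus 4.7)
The proof revolves around the averaging operator associated to the Haar state $h$ of the compact quantum group $\KK$, together with the observation that for $b\in{B}$ the element $\Delta(b)$ is a fixed point of a natural $\KK$-coaction. Set $\beta=(\id\tens\pi)\comp\Delta$. The intertwining $(\pi\tens\pi)\comp\Delta=\Delta_C\comp\pi$ combined with coassociativity of $\Delta$ makes $\beta\in\Mor(A,A\tens{C})$ into a right coaction of $\KK$ on $A$, whose fixed-point $*$-subalgebra is exactly $B$; from \eqref{AC} and surjectivity of $\pi$ (together with unitality of $C$) one also deduces that the linear span of $\beta(A)(\I\tens{C})$ is dense in $A\tens{C}$. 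Define
\[
E=(\id\tens{h})\comp\beta=\bigl(\id\tens(h\comp\pi)\bigr)\comp\Delta:A\to{A}.
\]
A short Sweedler-style calculation exploiting coassociativity of $\Delta$, the intertwining of $\pi$ and the invariance $(\id\tens{h})\Delta_C=h(\cdot)\I_C$ shows that $\beta\comp{E}=E(\cdot)\tens\I_C$ and $E|_B=\id_B$, so $E$ is a completely positive contractive conditional expectation of $A$ onto $B$; the same type of calculation also yields the crucial equivariance
\begin{equation}\label{eq:pltequi}
(\id\tens{E})\comp\Delta=\Delta\comp{E}.
\end{equation}

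\textbf{Parts \eqref{podlthm1} and \eqref{podlthm3}.} The set $B=E(A)$ is a norm-closed $*$-subalgebra of $A$ by construction. For nondegeneracy of $B$ in $A$ I would apply $E$ to an approximate unit $(e_\lambda)$ of $A$: nondegeneracy of the morphism $\beta$ gives $\beta(e_\lambda)\to\I$ strictly in $\M(A\tens{C})$, and strict continuity of the slice $\id\tens{h}:\M(A\tens{C})\to\M(A)$ on bounded nets then yields $E(e_\lambda)\to\I$ strictly in $\M(A)$; since $E(e_\lambda)\in{B}$ this furnishes an approximate unit of $A$ lying inside $B$. Part \eqref{podlthm3} is immediate from the counit axiom $(\epsilon\tens\id)\comp\Delta=\id$ restricted to $B$.

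\textbf{Parts \eqref{podlthm2} and \eqref{podlthm4}.} The key observation is that for $b\in{B}$ coassociativity and the definition of $B$ give
\[
(\id\tens\beta)\Delta(b)=(\Delta\tens\id)\bigl((\id\tens\pi)\Delta(b)\bigr)=\Delta(b)\tens\I_C,
\]
so $\Delta(b)\in\M(A\tens{A})$ is $(\id\tens\beta)$-invariant. Since also $\beta(\I_A)=\I_A\tens\I_C$ and $\beta(b')=b'\tens\I_C$ for every $b'\in{B}$, it follows that for any $a\in{A}$ and $b'\in{B}$ each of the elements $(a\tens\I)\Delta(b)$, $\Delta(b)(a\tens{b}')$ and $(a\tens{b}')\Delta(b)$, all in $A\tens{A}$, is $(\id\tens\beta)$-invariant. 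Using the identity $\id\tens{E}=(\id\tens\id\tens{h})\comp(\id\tens\beta)$ and $h(\I_C)=1$ we get $Y=(\id\tens{E})(Y)\in{A}\tens{B}$ for each such $Y$. The case $Y=(a\tens\I)\Delta(b)$ yields \eqref{podlthm4}, while the other two cases show that $\Delta(b)$ multiplies $A\tens{B}$ into itself on both sides, so $\bal(b)\in\M(A\tens{B})$. Nondegeneracy of the resulting morphism $\bal:B\to\M(A\tens{B})$ then follows by applying the bounded surjection $\id\tens{E}:A\tens{A}\to{A}\tens{B}$ to the dense subspace $\spn\bigl\{(c\tens\I)\Delta(d)\st{c,d}\in{A}\bigr\}$ provided by \eqref{AP}: by \eqref{eq:pltequi} the image is the span of the elements $(c\tens\I)\bal(E(d))$ with $c,d\in{A}$, which must therefore be dense in $A\tens{B}$. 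The only conceptually nontrivial step in the whole argument is the nondegeneracy of $B\subset{A}$ in part \eqref{podlthm1}; the rest is a routine slicing exercise against fixed points of the coaction $\id\tens\beta$, powered by coassociativity, invariance of $h$ and the simplification conditions \eqref{AC} and \eqref{AP}.
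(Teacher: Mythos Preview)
Your argument is correct and follows essentially the same line as the paper's proof: both hinge on the conditional expectation $E=(\id\tens{h})\comp\bga$ (your $\beta$ is the paper's $\bga$), the identity $(\id\tens{E})\comp\Delta=\Delta\comp{E}$, and the characterization of $A\tens{B}$ as the $(\id\tens\bga)$-fixed part of $A\tens{A}$. There are only two cosmetic differences in execution. For nondegeneracy of $B\subset{A}$ you push an approximate unit of $A$ through $E$ and invoke strict continuity of $\id\tens{h}$, whereas the paper slices the density condition \eqref{AP} against $h$ to obtain directly that $\spn\{aE(a')\st{a,a'}\in{A}\}$ is dense in $A$; both are standard and equivalent. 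For $\bal(b)\in\M(A\tens{B})$ you verify by hand that $\Delta(b)$ multiplies simple tensors $a\tens{b'}$ into $A\tens{B}$, while the paper first establishes the abstract identification $\M(A\tens{B})=\{X\in\M(A\tens{A})\st(\id\tens\bga)(X)=X\tens\I\}$ and then checks that $\bal(b)$ lies in the right-hand side; your route is slightly more direct and avoids proving the inclusion $\M(A\tens{B})\subset\{\text{fixed points}\}$, which the paper does but does not actually need for the theorem.
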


\begin{proof}
Denote $\bga=(\id\tens\pi)\Delta\in\Mor(A,A\tens{C})$. We note the identities
\begin{subequations}
\begin{align}
(\bga\tens\id)\comp\bga&=(\id\tens\Delta_C)\comp\bga,\label{gadeC}\\
(\Delta\tens\id)\comp\bga&=(\id\tens\bga)\comp\Delta\label{dega}
\end{align}
\end{subequations}
which follow directly form the coassociativity of $\Delta$. Next we note that
\begin{subequations}
\begin{align}
\overline{\spn{\bigl\{\bga(a)(\I\tens{c})\st{a}\in{A},\;c\in{C}\bigr\}}}&=A\tens{C},\label{gamC}\\
\overline{\spn{\bigl\{(a\tens\I)\bga(a')\st{a,a'}\in{A}\bigr\}}}&=A\tens{C}\label{gamP}
\end{align}
\end{subequations}
(in particular both sets on the left hand side are contained in $A\tens{C}$). Both \eqref{gamC} and \eqref{gamP} follow from \eqref{simplif} by the surjectivity of $\pi$. If $\KK$ has a continuous counit then it can be shown that $\epsilon=\epsilon_C\comp\pi$ and then $(\id\tens\epsilon_C)\comp\bga=\id$. This means that $\bga$ defines a continuous (right) action of $\KK$ on $A$.
% (A,\Delta) - bisimplifiable => S = zbiór charakterów A jest pólgrupą ze skracaniem. Niech \eta=\epsilon_C\comp\pi. Mamy \epsilon*\eta=\eta*\eta, więc z prawa skracania wnioskujemy, że \epsilon=\eta. - może to dać do \footnote?

Let $h$ be the Haar measure of $\KK$ and let
\[
E:A\ni{a}\longmapsto(\id\tens{h})\bga(a).
\]
Here we use the strictly continuous extension of $(\id\tens{h})$ to $\M(A\tens{C})$. By \eqref{gamC} $E$ has values in $A$. Indeed we have that $h=vh'$ for some $h'\in{C^*}$ and $v\in{C}$, so that
\[
(\id\tens{h})\bga(a)=(\id\tens{h'})\bigl(\bga(a)(\I\tens{v})\bigr)
\]
is an element of $A$. Moreover $E^2=E$ because
\[
\begin{split}
E^2(a)&=(\id\tens{h}\tens{h})\bigl((\bga\tens\id)\bga(a)\bigr)\\
&=(\id\tens{h}\tens{h})\bigl((\id\tens\Delta_C)\bga(a)\bigr)\\
% &=\bigl(\id\tens[(h\tens{h})\comp\Delta_C]\bigr)\bga(a)\\
&=(\id\tens{h})\bga(a)=E(a).
\end{split}
\]
In the above calculation we must remember that we are dealing with extensions of strictly continuous maps to multiplier algebras. For example the identity $(h\tens{h})\comp\Delta_C=h$ follows from the definition of the Haar measure, but $(\id\tens{h}\tens{h})\comp(\id\tens\Delta_C)=(\id\tens{h})$ is an identity between maps on $\M(A\tens{C})$. We obtain it by extending both sides from $A\tens{C}$ to $\M(A\tens{C})$ by strict continuity.

Similarly we show that $E(A)\subset{B}$. Indeed, denoting by $H$ the map $C\ni{c}\mapsto{h(c)\I}$ we have
\begin{equation}\label{gaE}
\begin{split}
\bga\bigl(E(a)\bigr)&=\bga\bigl((\id\tens{h})\bga(a)\bigr)\\
&=(\id\tens\id\tens{h})\bigl((\bga\tens\id)\bga(a)\bigr)\\
&=(\id\tens\id\tens{h})\bigl((\id\tens\Delta_C)\bga(a)\bigr)\\
&=\bigl(\id\tens[(\id\tens{h})\comp\Delta_C]\bigr)\bga(a)\\
&=(\id\tens{H})\bga(a)=E(a)\tens\I.
\end{split}
\end{equation}
As before the identities satisfied by the maps involved in the above calculation are extended by strict continuity to multiplier algebras.

An immediate consequence of \eqref{gaE} is that $B=E(A)$. To prove point \eqref{podlthm1} of Theorem \ref{podlthm} we note that it follows from \eqref{gamP} that
\[
\begin{split}
\spn\bigl\{ab\st{a}\in{A},\;b\in{B}\bigr\}
&=\spn\bigl\{aE(a')\st{a,a'}\in{A}\bigr\}\\
&=\spn\bigl\{a(\id\tens{h})\bga(a')\st{a,a'}\in{A}\bigr\}\\
&=\spn\bigl\{(\id\tens{h})\bigl((a\tens\I)\bga(a')\bigr)\st{a,a'}\in{A}\bigr\}
\end{split}
\]
is a dense subset of $A$. In other words the inclusion of $B$ into $A$ belongs to $\Mor(B,A)$.

In particular $\M(B)$ is the idealizer of $B$ in $\M(A)$ and $\M(A\tens{B})$ is the idealizer of $A\tens{B}$ in $\M(A\tens{B})$ (\cite[Proposition 2.3]{lance}), i.e.
\begin{equation}\label{fromL}
\M(A\tens{B})=\bigl\{X\in\M(A\tens{A})\st{XY},YX\in{A\tens{B}}\ \text{for all}\ Y\in{A\tens{B}}\bigr\}.
\end{equation}

Let us also note that we have
\begin{equation}\label{AtB}
A\tens{B}=\bigl\{Y\in{A\tens{A}}\st(\id\tens\bga)(Y)=Y\tens\I\bigr\}.
\end{equation}
Indeed, the inclusion ``$\subset$'' is clear. For ``$\supset$'' we use the conditional expectation $E$ in the same way as in the proof of Lemma \ref{inv}. Any $Y$ belonging to the right hand side of \eqref{AtB} satisfies $(\id\tens{E})(Y)=Y$ and it is not difficult to see that $A\tens{B}$ is the image of $(\id\tens{E})$. Using \eqref{AtB} we can show that
\begin{equation}\label{MAtB}
\M(A\tens{B})=\bigl\{X\in\M(A\tens{A})\st(\id\tens\bga)(X)=X\tens\I\bigr\}.
\end{equation}
The inclusion ``$\supset$'' follows because if $X\in\M(A\tens{A})$ satisfies $(\id\tens\bga)(X)=X\tens\I$ then for any $Y\in{A\tens{B}}$ we have $XY,YX\in{A\tens{B}}$, so that $X\in\M(A\tens{B})$. Conversely, if $X\in\M(A\tens{B})$ then for any $Y\in{A\tens{B}}$ we have $XY\in{A\tens{B}}$, so by \eqref{AtB} we have
\[
\begin{split}
\bigl((\id\tens\bga)(X)\bigr)(Y\tens\I)
&=\bigl((\id\tens\bga)(X)\bigr)\bigl((\id\tens\bga)(Y)\bigr)\\
&=(\id\tens\bga)(XY)=(XY\tens\I)=(X\tens\I)(Y\tens\I).
\end{split}
\]
This means that $T=(\id\tens\bga)(X)-X\tens\I\in\M(A\tens{A}\tens{B})$ satisfies
\[
T(Y\tens\I)=0
\]
for all $Y\in{A\tens{B}}$. It follows that $T(a\tens{a'}\tens{b})$ is zero for all $a,a'\in{A}$, $b\in{B}$, so that $T=0$ and we obtain ``$\subset$'' in \eqref{MAtB}.

Let us now address point \eqref{podlthm2} of Theorem \ref{podlthm}. The map $\bal=\bigl.\Delta\bigr|_B$ is a composition of the inclusion of $B$ into $A$ with $\Delta$. Therefore it is an element of $\Mor(B,A\tens{A})$. We want to show that it belongs to $\Mor(B,A\tens{B})$. For this it is enough to demonstrate that the range of $\bal$ lies in $\M(A\tens{B})$.

By application of $(\id\tens\id\tens{h})$ to both sides of \eqref{dega} we obtain
\[
\Delta\comp{E}=(\id\tens{E})\comp\Delta.
\]
This shows that the image of $\bal$ is in the image of $(\id\tens{E})$ extended to $\M(A\tens{A})$. Clearly for any $X\in{A\tens{A}}$ and $Y\in{A\tens{B}}$ we have
\begin{equation}\label{XY}
(\id\tens\bga)\bigl[\bigl((\id\tens{E})(X)\bigr)Y\bigr]=XY\tens\I.
\end{equation}
Both sides of this formula are strictly continuous with respect to $X$, so \eqref{XY} holds also for $X\in\M(A\tens{A})$. In view of \eqref{MAtB}, formula \eqref{XY} (together with the analogous one $(\id\tens\bga)\bigl[Y\bigl((\id\tens{E})(X)\bigr)\bigr]=YX\tens\I$ and its extension) for proves that the range of $\bal$ is contained in $\M(A\tens{B})$.

Point \eqref{podlthm3} of Theorem \ref{podlthm} is a straightforward consequence of the definition of $\bal$. To prove point \eqref{podlthm4} we note that
\[
\begin{split}
\bigl\{(a\tens\I)\bal(b)\st{a}\in{A},\;b\in{B}\bigr\}
&=\bigl\{(a\tens\I)\Delta\bigl(E(a')\bigr)\st{a,a'}\in{A}\bigr\}\\
&=\bigl\{(a\tens\I)(\id\tens{E})\Delta(a')\st{a,a'}\in{A}\bigr\}\\
&=\bigl\{(\id\tens{E})\bigl((a\tens\I)\Delta(a')\bigr)\st{a,a'}\in{A}\bigr\}
\end{split}
\]
so that $(a\tens\I)\bal(b)\in{A\tens{B}}$ for all $a\in{A}$, $b\in{B}$.
\end{proof}

\begin{rem}
Note that it follows easily from the last lines of the proof of Theorem \ref{podlthm} that the linear span of the set
\[
\bigl\{(a\tens\I)\bal(b)\st{a}\in{A},\;b\in{B}\bigr\}
\]
is dense in $A\tens{B}$ (cf.~Remark \ref{drm}).
\end{rem}

\end{document}